\title{On the proof of elimination of imaginaries in algebraically closed valued fields}
\author{Will Johnson}
\DeclareMathOperator*{\forkindep}{\raise0.2ex\hbox{\ooalign{\hidewidth$\vert$\hidewidth\cr\raise-0.9ex\hbox{$\smile$}}}}
\newcommand{\Sym}{\operatorname{Sym}}
\newcommand{\Hom}{\operatorname{Hom}}
\newcommand{\res}{\operatorname{res}}
\newcommand{\Aut}{\operatorname{Aut}}
\newcommand{\acl}{\operatorname{acl}}
\newcommand{\dcl}{\operatorname{dcl}}
\newcommand{\tp}{\operatorname{tp}}
\newcommand{\val}{\operatorname{val}}
\newtheorem{theorem}{Theorem}[section] 
\newtheorem{lemma}[theorem]{Lemma}
\newtheorem{lemmadefinition}[theorem]{Lemma-Definition}
\newtheorem{corollary}[theorem]{Corollary}
\newtheorem*{theorem-star}{Theorem}
\newtheorem*{conjecture-star}{Conjecture}
\theoremstyle{definition}
\newtheorem{definition}[theorem]{Definition}
\theoremstyle{remark}
\newtheorem{remark}[theorem]{Remark}
\newtheorem{claim}[theorem]{Claim}
\newtheorem{observation}[theorem]{Observation}
\newtheorem*{acknowledgment}{Acknowledgments}
\newenvironment{claimproof}[1][\proofname]
               {
                 \proof[#1]
                 
               }
               {
                 \endproof
               }
\newcommand{\Oo}{\mathcal{O}}
\begin{document}
\maketitle

ACVF is the theory of non-trivially valued \emph{a}lgebraically \emph{c}losed \emph{v}alued \emph{f}ields.  This theory is the model companion of the theory of valued fields.  ACVF does not have elimination of imaginaries in the home sort (the valued field sort).  Nevertheless, Haskell, Hrushovski, and Macpherson in \cite{HHM} were able to find a collection of ``geometric sorts'' in which elimination of imaginaries holds.

Let $K$ be a model of ACVF, with valuation ring $\mathcal{O}$ and residue field $k$.
A \emph{lattice} in $K^n$ is an $\mathcal{O}$-submodule $\Lambda \subseteq K^n$ isomorphic to $\mathcal{O}^n$.  Let $S_n$ denote the set of lattices in $K^n$.  This is an interpretable set; it can be identified with $GL_n(K)/GL_n(\mathcal{O})$.  For each lattice $\Lambda \subseteq K^n$, let $\res \Lambda$ denote $\Lambda \otimes_{\mathcal{O}} k$, a $k$-vector space of dimension $n$.  Let $T_n$ be
\[ T_n = \bigcup_{\Lambda \in S_n} \res \Lambda = \{(\Lambda, x) : \Lambda \in S_n, ~ x \in \res \Lambda\}.\]
This set is again interpretable.

The main result of \cite{HHM} is the following:
\begin{theorem-star}[Haskell, Hrushovski, Macpherson]
ACVF eliminates imaginaries relative to the sorts $K$, $\{S_n : n \ge 1\}$ and $\{T_n : n \ge 1\}$.
\end{theorem-star}

The proof in \cite{HHM} is long and technical, and we aim to give a more straightforward proof.  Our proof is a variant of Hrushovski's shorter proof in \cite{udi}, except that our strategy for coding definable types is different---see \ref{sec:contribution}.  We also give a new proof that finite sets of modules can be coded in the geometric sorts---see \ref{sec:horror}.

The proof of elimination of imaginaries given here aims to be more conceptual and less technical than previous proofs.  We prove no new results.  We include many details that are well-known at this point, for the sake of being self-contained.  

\section{Review of ACVF}
\subsection{Notation}
In a model of ACVF, $K$ is the home sort (the valued field), $\mathcal{O} \subseteq K$ is the valuation ring, $\mathfrak{M}$ is the maximal ideal in $\mathcal{O}$, $k = \mathcal{O}/\mathfrak{M}$ is the residue field, $\Gamma = K^\times/\mathcal{O}^\times$ is the valuation group, $\res : \mathcal{O} \to k$ is the residue map, and $\val : K \to \Gamma \cup \{+\infty\}$ is the valuation.  The value group is written additively, and ordered so that
\[ \mathcal{O} = \{x \in K : \val(x) \ge 0\}.\]

A \emph{lattice} in $K^n$ is an $\mathcal{O}$-submodule $\Lambda$ of $K^n$ which is free of rank $n$, i.e., isomorphic to $\mathcal{O}^n$.  If $\Lambda$ is a lattice, $\res \Lambda$ will denote $\Lambda / \mathfrak{M}\Lambda = \Lambda \otimes_{\mathcal{O}} k$.  This is always an $n$-dimensional $k$-vector space.  We will use the following interpretable sets:
\begin{itemize}
\item $S_n$, the set of lattices in $K^n$.
\item $T_n$, the set of pairs $(\Lambda,\xi)$, where $\Lambda \in S_n$ and $\xi \in \res \Lambda$
\item $R_{n,\ell}$, the set of pairs $(\Lambda,V)$, where $\Lambda \in S_n$ and $V$ is an $\ell$-dimensional subspace of $\res \Lambda$.
\end{itemize}
Each of these sets is easily interpretable in ACVF.  Our main goal will be to prove that ACVF has elimination of imaginaries in the sorts $K$ and $R_{n,\ell}$.  In \S\ref{coda}, we will note how this implies elimination of imaginaries in $K, S_n$, and $T_n$, the standard ``geometric sorts'' of \cite{HHM}.  \textbf{But until then, the term ``geometric sorts'' will mean the sorts $K$ and $R_{n,\ell}$.}

When working in an abstract model-theoretic context, the monster model will be denoted $\mathbb{M}$.  If a definable set or other entity $X$ has a code in $\mathbb{M}^{eq}$, the code will be denoted $\ulcorner X \urcorner$.  Unless stated otherwise, ``definable'' will mean ``interpretable.''

\subsection{Basic facts}
We assume without proof the following well-known facts about ACVF.  Many of these are discussed in \cite{survey}.
\begin{itemize}
\item Models of ACVF are determined up to elementary equivalence by characteristic and residue characteristic, which must be $(0,0)$, $(p,p)$, or $(0,p)$ for some prime $p$.
\item ACVF has quantifier elimination in the language with one sort $K$, with the ring structure on $K$, and with a binary predicate for the relation $\val(x) \ge \val(y)$.
\item C-minimality: Every definable subset $D \subseteq K^1$ is a boolean combination of open and closed balls (including points).  More precisely, $D$ can be written as a disjoint union of ``swiss cheeses,'' where a swiss cheese is a ball with finitely many proper subballs removed.  There is a canonical minimal way of decomposing $D$ as a disjoint union of swiss cheeses.  All the balls involved in this decomposition are algebraic over the code for $D$.
\item The theory ACVF does not have the independence property.  That is, ACVF is NIP.
\item The value group $\Gamma$ is o-minimal, in the sense that every definable subset of $\Gamma^1$ is a finite union of points and intervals with endpoints in $\Gamma \cup \{\pm \infty\}$.  (In fact, $\Gamma$ is a stably embedded pure divisible ordered abelian group.)
\item The residue field $k$ is strongly minimal, hence stable and stably embedded.  Moreover, every definable subset of $k^n$ is coded by a tuple from $k$.  (In fact, $k$ is a stably embedded pure algebraically closed field.)
\end{itemize}
The first two points are due to Robinson \cite{Robinson}, and the third is due to Holly \cite{holly}.
The last three points are easy consequences of C-minimality\footnote{The final point uses the following fact: if $T$ is a strongly minimal theory, in which $\acl(\emptyset)$ is infinite and finite sets of tuples are coded by tuples, then $T$ eliminates imaginaries.  This is Lemma 1.6 in \cite{anand}.}, and the last two can also be seen from the quantifier elimination result in the three-sorted language discussed in \cite{survey} and \cite{HHM}.

\subsection{Valued $K$-vector spaces}\label{val-k-vec}
Let $K$ be an \emph{arbitrary} valued field.  Following Section 2.5 of \cite{udi},
\begin{definition}
A \emph{valued $K$-vector space} is a $K$-vector space $V$ and a set $\Gamma(V)$ together with the following structure:
\begin{itemize}
\item A total ordering on $\Gamma(V)$
\item An action
\[ + : \Gamma(K) \times \Gamma(V) \to \Gamma(V)\]
of $\Gamma(K) = \Gamma$ on $\Gamma(V)$, strictly order-preserving in each variable (hence free)
\item A surjective map $\val : V \setminus \{0\} \to \Gamma(V)$, such that
\[ \val(w+v) \ge \min(\val(w),\val(v))\]
\[ \val(\alpha \cdot v) = \val(\alpha) + \val(v)\]
for $w, v \in V$ and $\alpha \in K$, with the usual convention that $\val(0) = +\infty > \Gamma(V)$.
\end{itemize}
\end{definition}
This is merely a variation on the notion of a normed vector space over a field with an absolute value.

\begin{remark}\label{few-orbits}
If $\dim_K V$ is finite, then the action of $\Gamma(K)$ on $\Gamma(V)$ has finitely many orbits.  In fact,
\[ |\Gamma(V)/\Gamma(K)| \le \dim_K V.\]
\end{remark}
\begin{proof}
Let $v_1, \ldots, v_n$ be non-zero vectors with $\val(v_n)$ in different orbits of $\Gamma(K)$.  We will show that the $v_i$ are linearly independent.  If not, let $w_1, \ldots, w_m$ be a minimal subset which is linearly dependent.  Then $\sum_i x_i w_i = 0$ for some $x_i \in K^\times$.  But by assumption,
\[ \val(x_i w_i ) = \val(x_i) + \val(w_i) \ne \val(x_j) + \val(w_j) = \val(x_j w_j)\]
for any $i \ne j$.  By the ultrametric inequality in $V$, $\sum_i x_i w_i$ cannot be zero, a contradiction.
\end{proof}

For the rest of this section, we will assume that all valued $K$-vector spaces $V$ have value group $\Gamma(V) = \Gamma(K) = \Gamma$, since the goal is Theorem~\ref{spherical-completeness}.

If $V$ and $W$ are two such valued $K$ vector spaces, we can form a ``direct sum'' $V \oplus W$ by setting
\[ \val(v,w) = \min(\val(v),\val(w)).\]
For example, $K^{\oplus n}$ is a valued $K$-vector space with underlying vector space $K^n$, with value group $\Gamma(K)$, and with valuation map given by
\[ \val(x_1, \ldots, x_n) = \min(\val(x_1),\ldots,\val(x_n)).\]
If $V$ and $W$ are two subspaces of a valued $K$-vector space, say that $V$ and $W$ are \emph{perpendicular} if $V \cap W = \emptyset$ and $V + W$ is isomorphic to $V \oplus W$.  In other words, $V$ and $W$ are perpendicular if $\val(v + w) = \min(\val(v),\val(w))$ for every $v \in V$ and $w \in W$.

Recall that a valued field $K$ is \emph{spherically complete} if every descending sequence of balls in $K$ has non-empty intersection.  If $V$ is a valued $K$-vector space, a \emph{ball} in $V$ is a set of the form
\[ \{\val(x - a) \ge \gamma\} \text{ or } \{\val(x - a) > \gamma\}\]
for $a \in V$ and $\gamma \in \Gamma(V)$.  We say that $V$ is \emph{spherically complete} if every descending sequence of balls in $V$ has a non-empty intersection.
\begin{remark}~ \label{steps}
\begin{enumerate}
\item If $V$ and $W$ are spherically complete, so is $V \oplus W$, because the balls in $V \oplus W$ are of the form $B_1 \times B_2$, with $B_1$ a ball in $V$ and $B_2$ a ball in $W$.
\item If $V$ is a subspace of a valued $K$-vector space $W$, and $a \in W$, then the intersection of any ball in $W$ with $a + V$ is either empty or a ball in the affine subspace $a + V$.
\item If $V$ is a spherically complete subspace of $W$ and $a \in W$, and $\mathcal{F}$ is the family of closed balls in $W$ centered at the origin which intersect $a + V$, then $\mathcal{F} \cap V$ is a nested chain of balls in $a + V$, so it has a non-empty intersection.  Equivalently, the following set has a maximum:
\[ \{ \val(a + v) : v \in V\}.\]
That is, some element of $a + V$ is maximally close to 0.
\end{enumerate}
\end{remark}
\begin{lemma}\label{steps4}
Let $W$ be a valued $K$-vector space.  Let $V$ be a subspace.  Suppose that $a \in W \setminus V$ is maximally close to 0 among elements of $a + V$.  Then $K \cdot a$ is perpendicular to $V$.
\end{lemma}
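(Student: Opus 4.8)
The plan is to verify directly the valuation identity that characterizes perpendicularity: namely, that $\val(\alpha a + v) = \min(\val(\alpha a), \val(v))$ for every $\alpha \in K$ and $v \in V$. When $\alpha = 0$ this is immediate, so I would assume $\alpha \ne 0$. Writing $\alpha a + v = \alpha(a + \alpha^{-1} v)$ and applying the homogeneity axiom $\val(\alpha x) = \val(\alpha) + \val(x)$, the identity for the pair $(\alpha, v)$ is equivalent to the identity for the pair $(1, \alpha^{-1} v)$. Since $\alpha^{-1} v$ again ranges over all of $V$ as $v$ does, it suffices to prove that $\val(a + v) = \min(\val(a), \val(v))$ for all $v \in V$.

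One inequality, $\val(a + v) \ge \min(\val(a), \val(v))$, is just the ultrametric inequality in $W$. For the reverse inequality I would split into two cases. If $\val(a) \ne \val(v)$, then the usual ultrametric argument gives equality: using $\val(a) = \val\big((a+v) + (-v)\big)$ together with $\val(-v) = \val(v)$ (which holds since $\val(-1) = 0$ in the ordered, hence torsion-free, group $\Gamma$), a strict inequality $\val(a + v) > \min(\val(a), \val(v))$ would force $\val(a) > \min(\val(a), \val(v))$ as well, which is absurd. If instead $\val(a) = \val(v)$, then $\min(\val(a), \val(v)) = \val(a)$, and this is where the hypothesis enters: since $a + v \in a + V$ and $a$ is maximally close to $0$ among elements of $a + V$, we have $\val(a + v) \le \val(a)$, which together with the ultrametric inequality yields $\val(a + v) = \val(a)$, as required.

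There is no serious obstacle here; the only points needing care are checking that the scaling reduction is legitimate and recognizing that the maximal-closeness hypothesis is precisely what rules out the otherwise-possible cancellation in the equal-valuation case. (Note also that $a \ne 0$, since $a \notin V \ni 0$, so $K \cdot a$ is genuinely a line; and the valuation identity above automatically forces $K a \cap V = 0$, by taking $\alpha a = -v$.)
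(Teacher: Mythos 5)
Your proof is correct and follows essentially the same route as the paper's: reduce to the case $\alpha = 1$ by scaling, apply the ultrametric inequality when $\val(a) \ne \val(v)$, and invoke the maximal-closeness hypothesis precisely in the case $\val(a) = \val(v)$. The only (inessential) difference is that you additionally spell out the reverse ultrametric argument and note that $Ka \cap V = 0$ is forced, points the paper leaves implicit.
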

\begin{proof}
We need to show that
\begin{equation} \val(v + \alpha a) = \min(\val(v),\val(\alpha a)) \label{meh}\end{equation}
for $v \in V$ and $\alpha \in K$.  Replacing $v$ and $\alpha$ with $\alpha^{-1}v$ and $\alpha^{-1}\alpha$ changes both sides of (\ref{meh}) by the same amount, so we may assume that $\alpha = 0$ or $\alpha = 1$.

The $\alpha = 0$ case is trivial.  Suppose that $\alpha = 1$; we want to show $\val(v + a) = \min(\val(v),\val(a))$.  If $\val(v) \ne \val(a)$, then $\val(v + a) = \min(\val(v),\val(a))$ by the ultrametric inequality.  In the case where $\val(v) = \val(a)$, the ultrametric inequality only implies
\begin{equation} \val(v + a) \ge \min(\val(v),\val(a)) = \val(a).\label{eq-7}\end{equation}
But $\val(v + a) \le \val(a)$, by the assumption on $a$.  So equality holds in (\ref{eq-7}).
\end{proof}

\begin{theorem}\label{spherical-completeness}
Suppose $K$ is spherically complete, $V$ is an $n$-dimensional $K$-vector space, and $\Gamma(K) = \Gamma(V)$.  Then $V$ is isomorphic to $K^{\oplus n}$.  In other words, there is a basis $\{v_1, \ldots, v_n\} \subseteq V$ such that
\[ \val(x_1 v_1 + \cdots + x_n v_n) = \min_{1 \le i \le n} \val(x_i) \text{ for every $\vec{x} \in K^n$}.\]
\end{theorem}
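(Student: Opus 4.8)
The plan is to induct on $n = \dim_K V$, peeling off one "perpendicular" line at a time using Lemma~\ref{steps4}. When $n = 0$ there is nothing to prove, and when $n = 1$ it suffices to choose any $v_1 \in V$ with $\val(v_1) = 0$; such a $v_1$ exists because $\val$ maps $V \setminus \{0\}$ onto $\Gamma(V) = \Gamma$ and $0 \in \Gamma$, and then $\val(x_1 v_1) = \val(x_1) + \val(v_1) = \val(x_1)$ for all $x_1 \in K$.

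For the inductive step, suppose $n \ge 2$ and fix an $(n-1)$-dimensional subspace $V' \subset V$. Restricting $\val$ to $V'$ makes $V'$ a valued $K$-vector space, and its value group is again all of $\Gamma$: for any nonzero $w \in V'$ one has $\{\val(\alpha w) : \alpha \in K^\times\} = \val(w) + \Gamma = \Gamma$, using surjectivity of $\val : K^\times \to \Gamma$. By the induction hypothesis $V' \cong K^{\oplus(n-1)}$. In particular $V'$ is spherically complete: $K^{\oplus 1}$ is spherically complete because its balls are exactly the balls of $K$ (and $K$ is spherically complete by hypothesis), and then $K^{\oplus m}$ is spherically complete for all $m$ by induction via Remark~\ref{steps}(1).

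Now choose any $a \in V \setminus V'$. Applying Remark~\ref{steps}(3) to the spherically complete subspace $V'$ of $W := V$ and the element $a$, the set $\{\val(a + v) : v \in V'\}$ attains a maximum, say at $a + v_0$; replacing $a$ by $a + v_0 \in V \setminus V'$, we may assume $a$ is maximally close to $0$ among the elements of $a + V'$. Then Lemma~\ref{steps4} gives that $K \cdot a$ is perpendicular to $V'$, i.e.\ $\val(v + \alpha a) = \min(\val(v), \val(\alpha a))$ for all $v \in V'$ and $\alpha \in K$. Since $a \notin V'$ we have $V' \cap Ka = \{0\}$, and since $\dim_K(V' + Ka) = n$ we conclude $V = V' \oplus Ka$ as $K$-vector spaces, with perpendicularity making this an isomorphism of valued $K$-vector spaces $V \cong V' \oplus (Ka)$. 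Finally pick $\alpha_0 \in K^\times$ with $\val(\alpha_0) = -\val(a)$ and set $v_n := \alpha_0 a$, so that $\val(v_n) = 0$ and $Ka = K v_n$ with $\val(x v_n) = \val(x)$; combining $v_n$ with a basis $v_1, \ldots, v_{n-1}$ of $V'$ as produced by the induction hypothesis yields a basis of $V$, and for $\vec{x} \in K^n$,
\[ \val\Big(\sum_{i=1}^n x_i v_i\Big) = \min\Big( \val\big(\textstyle\sum_{i=1}^{n-1} x_i v_i\big), \val(x_n v_n)\Big) = \min\big(\min_{i < n} \val(x_i),\, \val(x_n)\big) = \min_{1 \le i \le n} \val(x_i), \]
using perpendicularity in the first step and the induction hypothesis in the second.

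I do not expect a serious obstacle here: all the real content has been isolated into Lemma~\ref{steps4} and Remark~\ref{steps}, so the argument is essentially bookkeeping. The only points that require a little care are verifying that the hypotheses of those results transfer to the subspace $V'$ — namely that $V'$ still has value group $\Gamma$, and that (via $V' \cong K^{\oplus(n-1)}$) it is spherically complete — together with the final rescaling of $a$, which uses surjectivity of $\val$ on $K^\times$.
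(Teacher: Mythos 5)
Your proposal is correct and follows essentially the same route as the paper: induct on $\dim_K V$, take a codimension-one subspace $V'$, observe by induction that $V'\cong K^{\oplus(n-1)}$ and hence is spherically complete, use Remark~\ref{steps}(3) to find an element of a coset $a+V'$ maximally close to $0$, and then invoke Lemma~\ref{steps4} to split off a perpendicular line. You spell out a few bookkeeping points the paper leaves implicit (that $\Gamma(V')=\Gamma$, the base case, the final rescaling), but there is no difference of substance.
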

In \cite{udi}, Hrushovski calls $\{v_1, \ldots, v_n\}$ a ``separating basis.''
\begin{proof}
Proceed by induction on $\dim_K V$.  The one-dimensional case is easy.  Let $V'$ be a codimension 1 subspace.  By induction, $V'$ is isomorphic to $K^{\oplus (n-1)}$, so $V'$ is spherically complete.  Choose some $a_0 \in V \setminus V'$ and let $a$ be an element of $a_0 + V'$ maximally close to 0.  By Lemma~\ref{steps4}, $K \cdot a$ is perpendicular to $V'$.  Thus $V \cong V' \oplus K \cong K^{\oplus (n-1)} \oplus K = K^{\oplus n}$.
\end{proof}%

\subsection{Definable submodules of $K^n$}\label{module-classify}
We now return to the setting of ACVF.

Recall that every model of ACVF is elementarily equivalent to a spherical complete one.\footnote{This is well-known, and discussed in \cite{survey}.   In the pure characteristic case, one can use fields of Hahn series.  In the mixed characteristic case, one can use metric ultrapowers of $\mathbb{C}_p$.}

\begin{theorem}\label{module-classification}
Let $K$ be a model of ACVF.
Let $V$ be a definable $K$-vector space, with $\dim_K V < \infty$.  Let $N \subseteq V$ be a \emph{definable} $\mathcal{O}$-submodule.  Then $N$ is isomorphic to $K^{n_1} \times \mathcal{O}^{n_2} \times \mathfrak{M}^{n_3}$ for some $n_1, n_2, n_3 < n$.
\end{theorem}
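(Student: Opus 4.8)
The plan is to reduce to the case where $V$ is a valued $K$-vector space with $\Gamma(V) = \Gamma$ and then use a separating basis (Theorem~\ref{spherical-completeness}) to reduce to the one-dimensional case, where the classification of definable $\mathcal{O}$-submodules of $K^1$ is essentially C-minimality. First I would observe that, since everything in sight is definable and the statement is about isomorphism type, we may pass to a spherically complete elementary extension of $K$; being definable, $N$ is uniformly defined, and an $\mathcal{O}$-module isomorphism witnessed over the extension descends (or rather, the isomorphism type is an elementary invariant, so it suffices to prove the theorem for spherically complete $K$). Fixing a $K$-linear identification $V \cong K^n$, I would equip $V$ with the valuation of $K^{\oplus n}$, so that $V$ becomes a valued $K$-vector space with $\Gamma(V) = \Gamma$ and $K$ spherically complete; hence by Theorem~\ref{spherical-completeness} such structures are unique up to isomorphism, and I am free to choose whatever separating basis is convenient.

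The key construction is to find a separating basis adapted to $N$. Consider the set $\{\val(x) : x \in N \setminus \{0\}\} \subseteq \Gamma \cup \{+\infty\}$, or rather work subspace by subspace. I would argue by induction on $n = \dim_K V$. For the inductive step, I would like to find a codimension-one $K$-subspace $V' \subset V$ and a vector $a \in V \setminus V'$, perpendicular to $V'$, such that $N$ decomposes compatibly as $(N \cap V') \oplus (N \cap Ka)$. The natural candidate: let $K \cdot a$ be a line such that $N \cap Ka$ is ``as large as possible'' in a suitable sense, pick $a$ maximally close to $0$ within its coset structure so that Lemma~\ref{steps4} applies and $Ka \perp V'$ for a complementary $V'$, and then check that $N = (N \cap V') \oplus (N\cap Ka)$ using perpendicularity together with the ultrametric inequality (an element $v + \alpha a \in N$ forces $\val(v), \val(\alpha a) \geq \val(v+\alpha a)$, and one wants to conclude $v \in N$ and $\alpha a \in N$ separately, which will need $N$ to be ``$\val$-convex'' in the appropriate direction). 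Granting the decomposition, each factor $N \cap Ka$ is a definable $\mathcal{O}$-submodule of a one-dimensional space, hence by C-minimality is a ball around $0$, i.e.\ of the form $Ka$, $\mathcal{O}a$, or $\mathfrak{M}a$ (the only $\mathcal{O}$-submodules of $K^1$ that are definable are $0$, $\gamma \mathcal{O}$, $\gamma \mathfrak{M}$, $K$ for $\gamma$ in the value group, and up to the $K$-linear change of coordinate absorbing $\gamma$ these become $0, \mathcal{O}a, \mathfrak{M}a, Ka$; the $0$ case is excluded after discarding a complementary subspace). Collecting the factors of each type yields $N \cong K^{n_1} \times \mathcal{O}^{n_2} \times \mathfrak{M}^{n_3}$ with $n_1 + n_2 + n_3 \le n$.

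The main obstacle I anticipate is producing the compatible orthogonal splitting: it is not automatic that a definable $\mathcal{O}$-submodule $N$ splits along a separating basis, since $N$ need not be a ``coordinate box'' for an arbitrary basis. The right move is probably to choose the line $Ka$ to realize the supremum of $\{\val(a') : a' \in N, \ a' \notin V' \text{ for some hyperplane}\}$-type data, or more cleanly: among all $a \in N$ whose image spans $V/V'$ for some complement, take one with $\val(a)$ minimal (so $\mathcal{O}a \subseteq N$ or the line contributes a $K$ or $\mathfrak{M}$ factor), then invoke spherical completeness to pick $a$ also maximally close to $0$ in its coset modulo $V'$ so Lemma~\ref{steps4} gives $Ka \perp V'$; the verification that $N = (N\cap V') \oplus (N \cap Ka)$ — in particular that $v + \alpha a \in N$ implies $v \in N$ — should follow because $N$ is an $\mathcal{O}$-module and, by perpendicularity, $\val(v) \geq \val(v + \alpha a)$ together with convexity of $N$ under $\val$ forces membership. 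I would also need the small lemma that a definable $\mathcal{O}$-submodule of $K$ is $0$, $K$, or a ball $\{x : \val(x) \geq \gamma\}$ / $\{x : \val(x) > \gamma\}$ around $0$, which is immediate from C-minimality plus closure under multiplication by $\mathcal{O}$.
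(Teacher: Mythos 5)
There is a genuine gap, and it is in the very first move. You write ``Fixing a $K$-linear identification $V \cong K^n$, I would equip $V$ with the valuation of $K^{\oplus n}$,'' i.e.\ the \emph{standard} coordinatewise valuation coming from an arbitrary linear coordinate system. That valuation has no relationship to $N$. The whole plan then asks for a separating basis for this unrelated valued vector space structure along which $N$ splits as $(N\cap V')\oplus(N\cap Ka)$, and for a convexity-type fact (``$v+\alpha a\in N$ plus perpendicularity forces $v\in N$''). Neither holds for the standard valuation: e.g.\ in $K^2$ with the standard valuation, take $N=\mathcal{O}\cdot(1,\pi)$ for $\pi\in\mathfrak{M}\setminus\{0\}$. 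Then $(1,\pi)\in N$ with $\val(1,\pi)=0$, and $(1,0)$ has $\val(1,0)=0\ge\val(1,\pi)$, yet $(1,0)\notin N$; and $N$ is not a product $(N\cap Ke_1)\times(N\cap Ke_2)$ in these (already separating) coordinates. Choosing $a\in N$ with $\val(a)$ minimal and then maximally close to $0$ modulo $V'$ does not repair this, because Lemma~\ref{steps4} only gives perpendicularity for the valuation you started with, which knows nothing about $N$.

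The missing idea is that the valued-vector-space structure must be \emph{built from $N$ itself}. The paper first normalizes so that the $K$-span of $N$ is all of $V$ and $N$ contains no nonzero $K$-subspace, and then defines $\val(v)=\sup\{\val(\alpha):v\in\alpha N\}$; o-minimality of $\Gamma$ makes this well-defined, and being an $\mathcal{O}$-module gives the ultrametric inequality. Only \emph{then} does Theorem~\ref{spherical-completeness} produce a separating basis, and in those coordinates one gets the genuinely useful sandwich $\mathfrak{M}^n\subset N\subset\mathcal{O}^n$ (from the facts that $\val(v)>0\Rightarrow v\in N$ and $\val(v)<0\Rightarrow v\notin N$). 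Even after this, your ``peel off one orthogonal line at a time'' plan still does not finish by itself: the residual data is the $k$-subspace $N/\mathfrak{M}^n\subset k^n$, which need not be a coordinate subspace for your chosen separating basis (take $N=\{(x,y)\in\mathcal{O}^2:\res x=\res y\}$). The paper handles this by a $k$-linear change of basis of $k^n$ moving $N/\mathfrak{M}^n$ to $k^\ell\times 0^{n-\ell}$, lifted to $GL_n(\mathcal{O})$ (a lift exists since $\mathcal{O}$ is local), which preserves the separating property; that last step is what actually realizes $N\cong\mathcal{O}^\ell\times\mathfrak{M}^{n-\ell}$ and hence the full classification. So the two core steps your outline needs but does not supply are: defining the valuation from $N$, and the $GL_n(\mathcal{O})$ adjustment to align the separating basis with $N/\mathfrak{M}^n$.
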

\begin{proof}
We are trying to prove a conjunction of first-order sentences, so we may replace $K$ with an elementarily equivalent model.  Therefore, we may assume $K$ is spherically complete.

Replacing $V$ with the $K$-span of $N$, we may assume that $V$ is the $K$-span of $N$.  Similarly, if $W$ denotes the largest $K$-vector space contained in $N$, then by quotienting out $W$, we may assume that $N$ contains no nontrivial $K$-vector spaces.  Now $\bigcup_{\alpha \in K^\times} \alpha N = V$ and $\bigcap_{\alpha \in K^\times} \alpha N = 0$. 

For any nonzero $v \in V$, let
\[ \val(v) = \sup \{\val(\alpha) : v \in \alpha N\} = \inf \{\val(\alpha) : v \notin \alpha N\}.\]
This is well-defined by o-minimality of $\Gamma$, and one easily checks that
\begin{equation} \val(\beta v) = \val(\beta) + \val(v).\label{huh0}\end{equation}
\begin{equation} \val(v) > 0 \implies v \in N \label{huh1} \end{equation}
\begin{equation} \val(v) < 0 \implies v \notin N \label{huh2} \end{equation}
for all $\beta \in K$, $v \in V$.  We claim that $\val : V \to \Gamma$ makes $V$ into a valued $K$-vector space.  Given (\ref{huh0}), we merely need to check the ultrametric inequality
\[ \val(v + w) \ge \min(\val(v),\val(w)).\]
If this failed, then multiplying everything by an appropriate scalar, we would get
\[ \val(v + w) < 0 < \min(\val(v),\val(w)).\]
But then $v, w \in N$ and $v + w \notin N$, contradicting the fact that $N$ is a module.

So $\val : V \to \Gamma$ makes $V$ into a valued $K$-vector space.  By Theorem~\ref{spherical-completeness}, we can assume that $V$ is $K^{\oplus n}$.  Then (\ref{huh1}-\ref{huh2}) mean the following for $\vec{x} \in K^n$:
\begin{itemize}
\item If $\val(x_i) > 0$ for every $i$, then $\vec{x} \in N$.  In other words, $\mathfrak{M}^n \subseteq N$.
\item If $\val(x_i) < 0$ for some $i$, then $\vec{x} \notin N$.  In other words, $N \subseteq \mathcal{O}^n$.
\end{itemize}
So $N$ is sandwiched between $\mathcal{O}^n$ and $\mathfrak{M}^n$.  But the possibilities for $N$ then correspond to the submodules of $\mathcal{O}^n/\mathfrak{M}^n$, i.e., the $k$-subspaces of $k^n$.  These are easy to deal with.

Specifically, note that $N/\mathfrak{M}^n$ is a $k$-subspace of $\mathcal{O}^n/\mathfrak{M}^n = k^n$.  Let $\gamma$ be an element of $GL_n(k)$ sending $N/\mathfrak{M}^n \subseteq k^n$ to $k^\ell \times 0^{n - \ell} \subseteq k^n$ for $\ell = \dim_k N/\mathfrak{M}^n$.  Then $\gamma$ can be lifted to some $\gamma' \in GL_n(\mathcal{O})$, because $\mathcal{O}$ is a local ring.  If $N' = \gamma'(N)$, then $N'/\mathfrak{M}^n$ is $k^\ell \times 0^{n- \ell} = (\mathcal{O}^\ell \times \mathfrak{M}^{n-\ell})/\mathfrak{M}^n$.  So $N' = \mathcal{O}^\ell \times \mathfrak{M}^{n- \ell} \subseteq K^n$.  But $N'$ and $N$ are isomorphic.
\end{proof}

Let $Mod_n$ denote the set of definable submodules of $K^n$.  The theorem implies that the elements of $Mod_n$ fall into finitely many definable families.  Consequently, we get the following
\begin{corollary}
The set $Mod_n$ is interpretable.
\end{corollary}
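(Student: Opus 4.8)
The plan is to read off from Theorem~\ref{module-classification} that $Mod_n$ is a union of finitely many $GL_n(K)$-orbits, each of which is manifestly a definable quotient. Recall that to prove $Mod_n$ interpretable it suffices to exhibit a $\emptyset$-definable family $(N_a)_{a\in D}$ of $\mathcal O$-submodules of $K^n$, with $D$ interpretable, such that every definable submodule of $K^n$ occurs as some $N_a$: the relation ``$N_a=N_b$'' is then definable (it is $\forall v\,(v\in N_a\leftrightarrow v\in N_b)$), so $Mod_n$ is $D$ modulo this definable equivalence relation, and the evaluation relation $\{(v,\ulcorner N\urcorner):v\in N\}$ is carried along.

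For a quadruple $\bar n=(n_1,n_2,n_3,n_4)$ of nonnegative integers with $n_1+n_2+n_3+n_4=n$, set
\[ N_{\bar n}\;=\;K^{n_1}\times\mathcal O^{n_2}\times\mathfrak M^{n_3}\times\{0\}^{n_4}\;\subseteq\;K^n, \]
a $\emptyset$-definable submodule, and let $GL_n(K)$ act on submodules by $g\cdot N=\{gv:v\in N\}$. I claim that $Mod_n=\bigcup_{\bar n}GL_n(K)\cdot N_{\bar n}$. The inclusion $\supseteq$ is immediate, since each $g\cdot N_{\bar n}$ is a definable submodule. For $\subseteq$, given a definable submodule $N$ we must find $g\in GL_n(K)$ with $g\cdot N=N_{\bar n}$ for some $\bar n$; this is essentially the content of the proof of Theorem~\ref{module-classification}. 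After passing to a spherically complete elementary extension, that proof chooses a separating basis of the $K$-span $V'$ of $N$, modifies it by an element of $GL(\mathcal O)$, and thereby writes $N$ as $K^{n_1}\times\mathcal O^{n_2}\times\mathfrak M^{n_3}$ in coordinates on $V'$; extending this basis of $V'$ to a basis of $K^n$ yields the desired $g$, with $n_4=n-\dim_K V'$.

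Now fix $\bar n$. The map $GL_n(K)\to Mod_n$, $g\mapsto g\cdot N_{\bar n}$, has image the orbit $GL_n(K)\cdot N_{\bar n}$, and its fibres are precisely the left cosets of the $\emptyset$-definable subgroup $\operatorname{Stab}(N_{\bar n})=\{g\in GL_n(K):g\cdot N_{\bar n}=N_{\bar n}\}$ (this is $\emptyset$-definable because $N_{\bar n}$ is). So the orbit is in $\emptyset$-definable bijection with the interpretable set $GL_n(K)/\operatorname{Stab}(N_{\bar n})$, and the evaluation relation restricted to it is definable, as ``$v\in g\cdot N_{\bar n}$'' depends only on the coset $g\operatorname{Stab}(N_{\bar n})$. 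Taking the (finite) union over all admissible $\bar n$ --- e.g.\ forming the $\emptyset$-definable set $\bigsqcup_{\bar n}GL_n(K)$ and quotienting by the $\emptyset$-definable relation that identifies $g$ in the $\bar n$-copy with $h$ in the $\bar n'$-copy whenever $g\cdot N_{\bar n}=h\cdot N_{\bar n'}$ --- produces an interpretable set in $\emptyset$-definable bijection with $Mod_n$, compatibly with evaluation. Hence $Mod_n$ is interpretable. (The orbits $GL_n(K)\cdot N_{\bar n}$ are in fact pairwise disjoint, distinct $\bar n$ giving non-isomorphic $\mathcal O$-modules, so no collapsing is actually needed; but this is not required.)

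The only non-formal point is the inclusion $Mod_n\subseteq\bigcup_{\bar n}GL_n(K)\cdot N_{\bar n}$, i.e.\ upgrading the abstract isomorphism in Theorem~\ref{module-classification} to an honest $GL_n(K)$-conjugacy that accounts, via a $\{0\}^{n_4}$ block, for the case where $N$ does not span $K^n$. This is already present in the proof of that theorem (separating basis, a $GL(\mathcal O)$-twist, extension to a basis of $K^n$), so it is a matter of bookkeeping rather than a new idea; everything else reduces to the standard facts that a group modulo a definable subgroup is interpretable and that a finite union of interpretable sets is interpretable.
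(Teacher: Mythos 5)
Your proof is correct and takes the same route as the paper, whose entire argument for the corollary is the remark that Theorem~\ref{module-classification} implies $Mod_n$ falls into finitely many definable families. You have simply made those families explicit (the $GL_n(K)$-orbits of the standard modules $N_{\bar n}$, with the $\{0\}^{n_4}$ block handling the non-spanning case) and spelled out the quotient-by-stabilizer bookkeeping that the paper leaves to the reader.
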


\section{Generalities on Definable Types}
Work in an arbitrary theory $T$, with monster model $\mathbb{M}$.  By ``$C$-definable type,'' we will mean $C$-definable type over the monster, as opposed to some smaller model, unless stated otherwise.  By ``definable type,'' we mean a $C$-definable type for some $C \subseteq \mathbb{M}$.

In this section we review some well-known facts about definable types.  We omit many of the proofs, which are usually straightforward.

\subsection{Operations on definable types}
If $p$ is a $C$-definable type and $f$ is a $C$-definable function, there is a unique $C$-definable type $f_* p$ which is characterized by the following property:
\[ a \models p|B \implies f(a) \models f_*p | B \text{, for all small $B \supseteq C$ and all $a$.}\]
The choice of $C$ does not matter---if $p$ and $f$ are $C'$-definable for some other set $C'$, then the resulting $f_* p$ is the same.  The type $f_* p$ is called the \emph{pushforward} of $p$ along $f$.

If $p$ and $q$ are two $C$-definable types, there is a unique $C$-definable type $p \otimes q$ which is characterized by the following property:
\[ (a,b) \models p \otimes q | B \iff (a \models p | Bb) \wedge (b \models q | B) \text{, for all small $B \supseteq C$ and all $a,b$.}\]
Again, $p(x) \otimes q(y)$ does not depend on the choice of $C$.  The product operation is associative:
\[ (p(x) \otimes q(y)) \otimes r(z) = p(x) \otimes (q(y) \otimes r(z)),\]
but commutativity
\[ p(x) \otimes q(y) \stackrel{?}{=} q(y) \otimes p(x)\]
can fail.

\begin{remark}\label{products-and-pushforwards}
If $f,g$ are definable functions and $p, q$ are definable types, then $f_*p \otimes g_*q = (f \times g)_*(p \otimes q)$, where $f \times g$ sends $(x,y)$ to $(f(x),g(y))$.
\end{remark}

\subsection{Generically stable types}
Now assume that $T$ is NIP.  (This includes the case of ACVF.)
\begin{definition}
A definable type $p(x)$ is \emph{generically stable} if $p(x) \otimes q(y) = q(y) \otimes p(x)$ for every definable type $q(y)$.
\end{definition}
For other equivalent definitions of generic stability, see Section 3 of \cite{udi-anand}.

\begin{definition}\label{domination-def}
Let $f$ be a $C$-definable function and $p$ be a $C$-definable type.  Abusing terminology significantly, say that $p$ is \emph{dominated along $f$} if
\[ f(a) \models f_*p | B \implies a \models p|B \text{ for all small $B \supseteq C$ and all $a$.}\]
\end{definition}
Note that the converse implication holds by definition of $f_* p$. Unlike the previous definitions, this \emph{does} depend on the choice of $C$.  In the cases we care about, $C$ will be $\emptyset$.
\begin{remark}
Suppose $p$ is dominated along $f$, and $q$ is some other definable type.  If $B$ is a set over which everything is defined and over which the domination holds, then
\begin{equation}
(f(a),b) \models f_*p \otimes q | B \implies (a,b) \models p \otimes q | B
\label{convoluted}
\end{equation}
\end{remark}

We will use the following basic facts about generically stable types:
\begin{theorem} \label{gs} ~
\begin{description}
\item[(a)] Products of generically stable types are generically stable.
\item[(b)] Pushforwards of generically stable types are generically stable.
\item[(c)] If $p$ is dominated along $f$ and $f_* p$ is generically stable, then $p$ is generically stable.
\item[(d)] If $p$ and $q$ are generically stable, dominated along $f$ and $g$, respectively, then $p \otimes q$ is dominated along $f \times g$.
\item[(e)] To check generic stability, it suffices to show that $p$ commutes with itself, i.e., $p(x_1) \otimes p(x_2) = p(x_2) \otimes p(x_1)$.
\end{description}
\end{theorem}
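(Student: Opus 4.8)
The plan is to deduce everything from part (e)---the only statement with genuine content---together with the pushforward--product identity of Remark~\ref{products-and-pushforwards} and the domination identity~(\ref{convoluted}). Part (a) is pure formalism: if $p,q$ are generically stable and $r$ is any definable type, then associativity gives
\[ (p \otimes q) \otimes r = p \otimes (q \otimes r) = p \otimes (r \otimes q) = (p \otimes r) \otimes q = (r \otimes p) \otimes q = r \otimes (p \otimes q), \]
where the second equality uses generic stability of $q$ and the fourth uses generic stability of $p$; hence $p \otimes q$ is generically stable.

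For (e), the forward direction is immediate from the definition. For the converse, suppose $p$ commutes with itself. The identity $p(x_1) \otimes p(x_2) = p(x_2) \otimes p(x_1)$ says precisely that every Morley pair of $p$ (over any base) is symmetric, i.e.\ if $a \models p \mid C$ and $a' \models p \mid Ca$ then also $a \models p \mid Ca'$. Applying this below each initial segment and invoking Ramsey's theorem, a Morley sequence of $p$ over any base is invariant under all finite permutations, i.e.\ is \emph{totally indiscernible}. I would then appeal to the standard NIP picture (Section~3 of \cite{udi-anand}): a definable type with totally indiscernible Morley sequences is finitely satisfiable in a small model, and a type that is both definable and finitely satisfiable in a small model commutes with every definable type, the last implication being heir--coheir symmetry. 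This is the step that genuinely uses NIP, and where I expect the real work to be.

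Given (e), the other parts follow by combining it with Remarks~\ref{products-and-pushforwards} and~(\ref{convoluted}) and the standard facts about generically stable types (stationarity over an arbitrary base, symmetry of non-forking; again see \cite{udi-anand}). For (b): by Remark~\ref{products-and-pushforwards}, $f_*p(x_1) \otimes f_*p(x_2) = (f\times f)_*\bigl(p(x_1) \otimes p(x_2)\bigr)$ and likewise with the two copies interchanged; since $(f\times f)_*$ is compatible with the coordinate swap and $p$ commutes with itself, $f_*p$ commutes with itself, so $f_*p$ is generically stable by (e). For (c): let $(a,a')$ be a Morley pair of $p$ over $C$, so $a \models p \mid C$ and $a' \models p \mid Ca$. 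Generic stability of $f_*p$ makes the Morley pair $(f(a),f(a'))$ symmetric, so $f(a) \models f_*p \mid Cf(a')$; since $a' \models p \mid Ca$ exhibits $\tp(a'/Ca)$ as a non-forking extension of $\tp(a'/C)$, the non-forking calculus (symmetry, base monotonicity) for the generically stable $f_*p$ together with stationarity of $f_*p$ upgrades this to $f(a) \models f_*p \mid Ca'$; domination of $p$ along $f$ then gives $a \models p \mid Ca'$, which is symmetry of $(a,a')$. Thus $p$ commutes with itself and (e) applies. For (d): by Remark~\ref{products-and-pushforwards} we have $(f\times g)_*(p\otimes q) = f_*p \otimes g_*q$, so it is enough to pull back a realization $(f(a),g(b))$ of $f_*p \otimes g_*q$ over a base $C$ to a realization $(a,b)$ of $p\otimes q$ over $C$. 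Identity~(\ref{convoluted}) (domination of $p$ along $f$) replaces $f(a)$ by $a$, giving $(a,g(b)) \models p \otimes g_*q \mid C$; generic stability of $p$ lets me commute the factors, so $a \models p \mid C$ and $g(b) \models g_*q \mid Ca$; domination of $q$ along $g$ over the base $Ca$ gives $b \models q \mid Ca$; and commuting once more (generic stability of $p$) identifies the configuration $a \models p \mid C$, $b \models q \mid Ca$ with a realization of $p \otimes q$.

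The main obstacle is the non-formal half of (e): the passage from ``self-commuting'' to ``generically stable'', which rests on the NIP analysis of totally indiscernible sequences and on heir--coheir symmetry. Everything else is bookkeeping with $\otimes$, $f_*$, and domination, together with the standard properties of generically stable types; the one recurring subtlety is that domination recovers $\tp(a/B)$ from $\tp(f(a)/B)$ only over the \emph{same} base $B$, so to pull the lower factor of a product back through a pushforward one must first commute the factors---using generic stability---so that the factor being pulled back sits on top.
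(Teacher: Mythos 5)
Your parts (a) and (d) coincide with the paper's proofs essentially word for word, and your (e) is a valid alternative to the paper's: the paper gives a short self-contained IP argument (realize $p^{\otimes n}\otimes q\otimes p^{\otimes n}$, use the fact that all permutations of the $2n$ copies of $p$ have the same type, and conjugate to obtain all alternation patterns for the separating formula), whereas you cite the package totally-indiscernible $\Rightarrow$ finitely satisfiable $\Rightarrow$ commutes-with-everything from \cite{udi-anand}. Both work; the paper's is more elementary.

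The substantive divergence is your decision to route (b) and (c) through (e). The paper does not use (e) to prove the other parts at all---its proofs of (b), (c), (d) argue directly with an arbitrary definable type $q$. For (b) your route is fine. But for (c) it creates exactly the difficulty you flag with the phrase ``the non-forking calculus \ldots\ upgrades this to $f(a)\models f_*p\mid Ca'$.'' That upgrade, from base $Cf(a')$ to the strictly larger base $Ca'$, is not bookkeeping: justifying it requires the Hrushovski--Pillay facts on forking symmetry and stationarity for generically stable types over (at least) algebraically closed sets, which the paper never invokes. The paper sidesteps the issue by arranging for domination to be applied at the \emph{same} base that the commutation produces: take arbitrary $q$, realize $q\otimes p\mid B$ by $(b,a)$ so that $a\models p\mid B$ sits at the bottom; push that coordinate forward to $(b,f(a))\models q\otimes f_*p\mid B$; use generic stability of $f_*p$ against $q$ (not merely against itself) to swap, landing $f(a)\models f_*p\mid Bb$; and apply domination over $Bb$ to recover $a\models p\mid Bb$. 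Nothing needs upgrading because the pulled-back element is placed over the very base at which domination is invoked. If you insist on the (e)-based organization for (c), set $q=p$ in this argument to get self-commuting of $p$ with no forking calculus; but at that point the detour through (e) saves you nothing, since the same three lines already handle arbitrary $q$.
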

\begin{proof} ~
\begin{description}
\item[(a)] If $p$ and $q$ are generically stable, and $r$ is arbitrary, then $p \otimes q \otimes r = p \otimes r \otimes q = r \otimes p \otimes q$.
\item[(b)] Suppose $p$ is generically stable, $f$ is a definable function, and $q$ is arbitrary.  Then $p(x) \otimes q(y) = q(y) \otimes p(x)$.  Pushing both sides forwards along $(f \times id)$ and applying Remark~\ref{products-and-pushforwards}, we get that $f_* p(x') \otimes q(y) = q(y) \otimes f_*p(x')$.
\item[(c)] 
Let $q$ be another invariant type; we will show that $p(x) \otimes q(y) = q(y) \otimes p(x)$.  Let $B$ be a set over which $p, q, f$ are defined.  Let $(b,a)$ realize $q \otimes p | B$.  By Remark~\ref{products-and-pushforwards}, $(b,f(a)) \models q \otimes f_*p | B$.  Since $f_*p $ is generically stable, $(f(a),b) \models f_*p \otimes q | B$.  By (\ref{convoluted}), $(a,b) \models p \otimes q | B$.  So $p \otimes q$ and $q \otimes p$ agree when restricted to the arbitrary set $B$.
\item[(d)]  Let $B$ be a sufficiently big set.  Suppose that $(f(a),g(b)) \models f_*p \otimes g_*q | B$.  We need to show that $(a,b) \models p \otimes q | B$.  By (\ref{convoluted}), $(a,g(b)) \models p \otimes g_*q | B$.  By generic stability of $p$, $(g(b),a) \models g_*q \otimes p | B$.  By (\ref{convoluted}) again, $(b,a) \models q \otimes p | B$.  By generic stability again, $(a,b) \models p \otimes q | B$.
\item[(e)] Suppose $p(x)$ commutes with itself, but $p(x) \otimes q(y) \ne q(y) \otimes p(x)$.  Choose some formula $\phi(x;y;c)$ which is in $p(x) \otimes q(y)$ and not in $q(y) \otimes p(x)$.  We will prove that $\phi(x;y,z)$ has the independence property.  Let $n$ be arbitrary.  Let $a_1, \ldots, a_n, b, a_{n+1}, \ldots, a_{2n}$ realize $p^{\otimes n} \otimes q \otimes p^{\otimes n}$ restricted to $c$.  Then $\models \phi(a_i;b;c) \iff i \le n$, by choice of $\phi(x;y;c)$.  The fact that $p$ commutes with itself implies that all permutations of $(a_1,\ldots,a_{2n})$ have the same type over $c$.  Therefore, for each permutation $\pi$ of $\{1, \ldots, 2n\}$, we can find a $b_\pi$ such that $\phi(a_i;b_\pi;c)$ holds iff $\pi(i) \le n$.  It follows that for any $S \subseteq \{1,\ldots,n\}$, we can find a $b_S$ such that $\phi(a_i;b_S;c)$ holds if and only if $i \in S$.  As $n$ was arbitrary, $T$ has the independence property, a contradiction.
\end{description}
\end{proof}

\subsection{Definable types in ACVF}\label{our-def-types}
Now work in ACVF.  Recall that ACVF is NIP.  We will make use of several definable types:
\begin{itemize}
\item If $B$ is an open or closed ball in the home sort, then there is a complete type $p_B(x)$ over $\mathbb{M}$ which says that $x \in B$ and $x$ is not in any strictly smaller balls.  This type is called the \emph{generic type of $B$}.  Completeness follows from $C$-minimality.  This type is definable, essentially because if $B'$ is any other ball, then the formula $x \in B'$ is in $p_B(x)$ if and only if $B' \supseteq B$.  If $C$ is any set of parameters over which $B$ is defined, then $p_B|C$ says precisely that $x$ is in $B$, and $x$ is not in any $\acl^{eq}(C)$-definable proper subball of $B$.
\item There is also a type $p_k(x)$ which says that $x$ is in the residue field, and is not algebraic over $\mathbb{M}$.  This is called the \emph{generic type of the residue field}, and is definable because $k$ is strongly minimal.  If $C$ is any set of parameters, $p_k|C$ says precisely that $x \in k$ and $x \notin \acl^{eq}(C)$.
\item The valuation ring $\mathcal{O}$ is a closed ball, so it has a generic type $p_\mathcal{O}$.  Over any set of parameters $C$, $p_\mathcal{O}(x)$ says that $x \in \mathcal{O}$, and that $x$ is not in any $\acl^{eq}(C)$-definable proper subballs of $\mathcal{O}$.  Every proper subball of $\mathcal{O}$ is contained in a unique one of the form $\res^{-1}(\alpha)$, for $\alpha \in k$.  Consequently, $p_\mathcal{O}|C$ equivalently says that $x \in \mathcal{O}$ and that $x \notin \res^{-1}(\alpha)$ for any $\alpha \in \acl^{eq}(C)$.  Equivalently,
\[ x \models p_\mathcal{O}|C \iff \res(x) \models p_k|C\]
Therefore, $p_\mathcal{O}$ is dominated along $\res$, and $\res_* p_\mathcal{O} = p_k$.
\end{itemize}
The type $p_k$ is generically stable.  To see this, use (e) of Theorem~\ref{gs} and stability of $k$.

Since $p_k$ is generically stable, so is $p_\mathcal{O}$, by Theorem~\ref{gs}(c).  If $B$ is any closed ball, then there is an affine transformation $f(x) = ax + b$ sending $\mathcal{O}$ to $B$, and $p_B = f_* p_\mathcal{O}$.  By Theorem~\ref{gs}(b), each $p_B$ is generically stable.

Let $p_{\mathcal{O}^n}$ be $p_\mathcal{O}^{\otimes n}$.  We think of $p_{\mathcal{O}^n}$ as the generic type of the lattice $\mathcal{O}^n$.
  By Theorem~\ref{gs}, $p_{\mathcal{O}^n}$ is generically stable, and is dominated along the map $(x_1,\ldots,x_n) \mapsto (\res(x_1),\ldots,\res(x_n))$.  Also, the pushforward along this map is  $p_k^{\otimes n}$, the generic type of $k^n$.

The generic type of $k^n$ is stabilized by the action of $GL_n(k)$, so by domination, the generic type of $\mathcal{O}^n$ is stabilized by $GL_n(\mathcal{O})$.  In light of this, the following definition does not depend on the choice of $g$:
\begin{definition}
Let $\Lambda$ be a lattice in $K^n$.  The \emph{generic type $p_\Lambda$ of $\Lambda$} is $g_* p_{\mathcal{O}^n}$, where $g : K^n \to K^n$ is any linear map sending $\mathcal{O}^n$ to $\Lambda$.
\end{definition}
Moreover, $p_\Lambda$ is $\ulcorner \Lambda \urcorner$-definable.
Note that $p_\Lambda$ is a generically stable type, because it is a pushforward of a generically stable type.

\subsection{Left transitivity}
Now return to an arbitrary theory $T$.  Work in $T^{eq}$, so that $\acl$ and $\dcl$ mean $\acl^{eq}$ and $\dcl^{eq}$.

\begin{lemma}\label{basic-transitivity}
Suppose $C \subseteq B$ are small sets and $a_1, a_2$ are tuples (possibly infinite, but small).  If $\tp(a_1/B)$ is $C$-definable and $\tp(a_2/Ba_1)$ is $Ca_1$-definable, then $\tp(a_2a_1/B)$ is $C$-definable.
\end{lemma}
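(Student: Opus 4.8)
The plan is to verify the definability of $\tp(a_2 a_1/B)$ formula-by-formula, using the two given definability hypotheses to produce, for each formula $\phi(x_1, x_2; y)$, a defining scheme over $C$. Let me think about what I need.

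We want: for each formula $\phi(x_1, x_2; y)$ (where $x_1$ matches $a_1$, $x_2$ matches $a_2$), the set $\{b \in B : \models \phi(a_1, a_2; b)\}$ is cut out by a formula with parameters in $C$.

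So first I would fix $\phi(x_1, x_2; y)$ and $b$ from $B$. Whether $\models \phi(a_1, a_2; b)$ holds is governed by $\tp(a_2/Ba_1)$ via its defining scheme: there is a formula $d_{x_2}\phi(x_1; y; z)$ — let me call it $\psi(x_1, y, z)$ — and a tuple $e$ from $Ca_1$ such that for all $b' \in Ba_1$, $\models \phi(a_1, a_2; b') \iff \models \psi(a_1; b'; e)$. Since $e \in \dcl(Ca_1)$, write $e = h(a_1, c_0)$ for some $\emptyset$-definable $h$ and $c_0 \in C$. Then $\models \phi(a_1, a_2; b) \iff \models \psi(a_1; b; h(a_1, c_0))$, and the right side is a formula $\chi(x_1; y)$ in $x_1, y$ with parameters $c_0 \in C$: namely $\chi(x_1; b) := \psi(x_1; b; h(x_1, c_0))$.

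Next, observe that $\chi(x_1; b)$ is a formula over $C$ (parameters $c_0$ and $b$, but $b \in B$), and $\tp(a_1/B)$ is $C$-definable, so there is a formula $d_{x_1}\chi(y; w)$ with a tuple $c_1 \in C$ such that $\models \chi(a_1; b) \iff \models (d_{x_1}\chi)(b; c_1)$ for all $b \in B$. Composing: $\models \phi(a_1, a_2; b) \iff \models (d_{x_1}\chi)(b; c_1)$, which is a formula in $y$ over $C$. Since $b$ was an arbitrary tuple from $B$, this shows $\{b \in B: \models \phi(a_1,a_2;b)\}$ is $C$-definable. As $\phi$ was arbitrary, $\tp(a_2 a_1/B)$ is $C$-definable.

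The one subtlety — the step I would treat most carefully — is the bookkeeping with the parameter $e \in \dcl(Ca_1)$: we must genuinely absorb it into the formula by writing $e = h(a_1, c_0)$ with $c_0 \in C$, so that the resulting $\chi(x_1; b)$ has parameters only from $C$ (plus $b$) and genuinely has $x_1$ as a free variable we can then feed to the defining scheme of $\tp(a_1/B)$. There is also the minor point that the defining scheme for $\tp(a_2/Ba_1)$ a priori only controls formulas with parameters in $Ba_1$, which is exactly the set $\chi(x_1;b)$ ranges over, so this is fine; and one should note that passing through $\dcl^{eq}$ is harmless since we are already working in $T^{eq}$. None of this is deep — it is the standard "left transitivity of definable types" argument — but the variable-tracking is where an error would hide, so I would write it out explicitly rather than waving at it.
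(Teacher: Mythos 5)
Your proof is correct and takes essentially the same approach as the paper's: first use the $Ca_1$-definability of $\tp(a_2/Ba_1)$ to replace $\phi(a_1,a_2;b)$ by an equivalent formula in $a_1,b$ over $C$, then apply the $C$-definability of $\tp(a_1/B)$ to that formula. The only cosmetic difference is that the paper first names the parameters from $C$ to reduce to $C=\emptyset$ (so the defining parameter for the second type is just $a_1$, avoiding the explicit $e=h(a_1,c_0)$ bookkeeping), whereas you carry $c_0$ through; the substance is identical.
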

\begin{proof}
Naming the parameters from $C$, we may assume $C = \emptyset$.  Let $\phi(x_2,x_1;y)$ be a formula; we must produce a $\phi$-definition (over $\emptyset$) for $\tp(a_2a_1/B)$.  Since $\tp(a_2/Ba_1)$ is $a_1$-definable, the $\phi(x_2;x_1,y)$-type of $a_2$ over $Ba_1$ has a definition $\psi(y,a_1)$.  In particular, for every tuple $b$ from $B$,
\[ \models \phi(a_2,a_1,b) \leftrightarrow \psi(b,a_1).\]
Meanwhile, since $\tp(a_1/B)$ is 0-definable, there is a formula $\chi(y)$ such that for every $b$ in $B$,
\[ \models \psi(b,a_1) \leftrightarrow \chi(b).\]
Thus, for every $b$ in $B$,
\[ \models \phi(a_2,a_1,b) \leftrightarrow \chi(b),\]
so $\chi(y)$ is the $\phi(x_2,x_1;y)$-definition of $\tp(a_2a_1/B)$.
\end{proof}

\begin{lemmadefinition}\label{indep1}
The following are equivalent for $A, B, C$ small sets of parameters.
\begin{enumerate}
\item The $*$-type $\tp(A/BC)$ has a global $C$-definable extension.
\item For every small set of parameters $D$, there is a $C$-definable extension of $\tp(A/BC)$ to a $*$-type over $BCD$.
\item \label{tres} For every small set of parameters $D$, there is $D' \equiv_{BC} D$ such that $\tp(A/BCD')$ is $C$-definable.
\item For some small model $M \supseteq BC$, $\tp(A/M)$ is $C$-definable.
\end{enumerate}
We denote these equivalent conditions by $A \forkindep^{\textrm{def}}_C B$.
\end{lemmadefinition}
\begin{proof}
\begin{description}
\item[(1$\implies$2)] The restriction of a global $C$-definable type to $BCD$ is $C$-definable.
\item[(2$\implies$3)] Given $D$, (2) implies that there is $A' \equiv_{BC} A$ such that $\tp(A'/BCD)$ is $C$-definable.  Choose $D'$ such that $A'D \equiv_{BC} AD'$.  Then $\tp(A/BCD')$ is $C$-definable and $D' \equiv_{BC} D$.
\item[(3$\implies$4)] Applying (3) to $D$ a small model containing $BC$, we get a small model $D'$ containing $BC$ such that $\tp(A/BCD') = \tp(A/D')$ is $C$-definable.
\item[(4$\implies$1)] $C$-definable types over models have unique $C$-definable extensions to elementary extensions.  This is true even for $*$-types.
\end{description}
\end{proof}

\begin{lemma}\label{indep1-trans}
If $a_1 \forkindep^{\textrm{def}}_C B$ and $a_2 \forkindep^{\textrm{def}}_{Ca_1} B$ then $a_2 a_1 \forkindep^{\textrm{def}}_C B$, so $\forkindep^{\textrm{def}}$ satisfies left-transitivity.
\end{lemma}
\begin{proof}
We use condition (\ref{tres}) of Lemma-Definition~\ref{indep1}.  Let $D$ be a small set of parameters.  Since $a_1 \forkindep^{\textrm{def}}_C B$, there is $D' \equiv_{BC} D$ such that $\tp(a_1/BCD')$ is $C$-definable.  As $a_2 \forkindep^{\textrm{def}}_{Ca_1} B$ there is $D'' \equiv_{Ca_1B} D'$ such that $\tp(a_2/BCa_1D'')$ is $Ca_1$-definable.  Note that $\tp(a_1/BCD'')$ is $C$-definable.  By Lemma~\ref{basic-transitivity}, it follows that $\tp(a_2a_1/BCD'')$ is $C$-definable.  Since $D'' \equiv_{BC} D' \equiv_{BC} D$, we have verified $a_2 a_1 \forkindep^{\textrm{def}}_C B$ using condition (\ref{tres}).
\end{proof}

\begin{definition}
If $A, B, C$ are small sets of parameters, we will write $A
\forkindep^{\textrm{adef}}_C B$ to mean $A
\forkindep^{\textrm{def}}_{\acl(C)} B$.  (Recall that $\acl(C)$ means
$\acl^{eq}(C)$.)
\end{definition}
In other words, $a \forkindep^{\textrm{adef}}_C B$ if $\tp(a/BC)$ can be extended to a type which is almost $C$-definable, that is, $\acl(C)$-definable.  In a stable theory, $\forkindep^{\textrm{adef}}$ is exactly nonforking independence.

\begin{lemma}\label{models-acl}
If $A \forkindep^{\textrm{adef}}_C B$, then $\acl(AC) \forkindep^{\textrm{adef}}_C B$.
\end{lemma}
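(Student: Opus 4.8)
The plan is to unwind the definitions down to a statement about one global definable type, and prove that. Put $C_0 := \acl(C)$, so $C_0 = \acl(C_0)$ and $\acl(AC) = \acl(C_0 \cup A)$. Since $\forkindep^2_C$ is by definition $\forkindep^1_{C_0}$, the hypothesis reads $A \forkindep^1_{C_0} B$ and the goal is $\acl(C_0 A) \forkindep^1_{C_0} B$. By condition (1) of Lemma-Definition~\ref{indep1}, the hypothesis yields a global $C_0$-definable type $\hat p \supseteq \tp(A/BC_0)$. Fix a realization $A^*$ of $\hat p$ (in a bigger monster), so $\tp(A^*/\mathbb{U}) = \hat p$ is $C_0$-definable and $A^* \equiv_{BC_0} A$, and enumerate $\acl^{eq}(C_0 A^*)$ compatibly with an enumeration of $\acl^{eq}(C_0 A)$. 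It then suffices to show that $\tp(\acl^{eq}(C_0 A^*)/\mathbb{U})$ is $C_0$-definable: it is a global $C_0$-definable type restricting over $BC_0$ to $\tp(\acl^{eq}(C_0 A)/BC_0)$, so condition (1) gives $\acl^{eq}(C_0 A) \forkindep^1_{C_0} B$, i.e. $\acl(AC) \forkindep^2_C B$.

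So the content is: if $\hat p$ is a global $C_0$-definable type with $C_0 = \acl(C_0)$ and $A^* \models \hat p$, then $\tp(\acl^{eq}(C_0 A^*)/\mathbb{U})$ is $C_0$-definable. The first ingredient is a genericity fact: $\acl^{eq}(C_0 A^*) \cap \mathbb{U}^{eq} = C_0$. If $d \in \mathbb{U}^{eq} \cap \acl^{eq}(C_0 \bar a)$ for some finite $\bar a \subseteq A^*$, choose a $C_0$-formula $\phi(w;\bar a,\bar c_0)$ with $\phi(d;\bar a,\bar c_0)$ having at most $N$ solutions; the formula $\phi(d; x, \bar c_0)$ lies in $\tp(\bar a/\mathbb{U})$, which is $C_0$-invariant, so applying automorphisms over $C_0$ shows $\phi(d';\bar a,\bar c_0)$ holds for every $C_0$-conjugate $d'$ of $d$; as there are at most $N$ such $d'$, $d$ has finitely many $C_0$-conjugates, so $d \in \acl^{eq}(C_0) = C_0$.

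The second ingredient is the crux: for finite $\bar a \subseteq A^*$ one has $\acl^{eq}(C_0 \bar a) \cap \dcl^{eq}(\mathbb{U}\bar a) = \dcl^{eq}(C_0 \bar a)$. Write $d$ in the left-hand side as $d = g(\bar m, \bar a)$ with $\bar m \in \mathbb{U}$ and $g$ definable over $\emptyset$. Since $\tp(\bar a/\mathbb{U})$ is $C_0$-definable, the relation ``$g(\bar z_1, \bar a) = g(\bar z_2, \bar a)$'' on $\mathbb{U}$ is $C_0$-definable; let $S = \{\bar z \in \mathbb{U} : g(\bar z,\bar a) = d\}$ be the class of $\bar m$. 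On the one hand $S$ is $C_0\bar m$-definable, so $\ulcorner S \urcorner \in \mathbb{U}^{eq}$; on the other hand $S$ is definable over $\bar a d$ with $d \in \acl^{eq}(C_0\bar a)$, so $\ulcorner S \urcorner \in \acl^{eq}(C_0\bar a)$. By the genericity fact, $\ulcorner S \urcorner \in C_0$. But $g(\cdot,\bar a)$ is constant (namely $d$) on the nonempty set $S$, so $d$ is a $\emptyset$-definable function of $\bar a$ together with $\ulcorner S \urcorner$, whence $d \in \dcl^{eq}(C_0\bar a)$.

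Finally, assemble. Given a finite subtuple $\bar e \in \acl^{eq}(C_0\bar a)$ of $\acl^{eq}(C_0 A^*)$, the crux forces the $C_0\bar a$-conjugacy class $R$ of $\bar e$ to equal its $\mathbb{U}\bar a$-conjugacy class: a proper nonempty $\mathbb{U}\bar a$-orbit inside $R$ would have canonical parameter in $\acl^{eq}(C_0\bar a) \cap \dcl^{eq}(\mathbb{U}\bar a) = \dcl^{eq}(C_0\bar a)$, hence be $C_0\bar a$-definable and so a union of $C_0\bar a$-orbits, impossible as $R$ is a single one. Therefore, for any formula $\psi(\bar x,\bar w;\bar y)$ and any $\bar b \in \mathbb{U}$, $\psi(\bar a,\bar e;\bar b)$ holds iff $\psi(\bar a,\bar w;\bar b)$ holds for every $\bar w \in R$, i.e. iff $\forall\bar w\,(\theta(\bar w;\bar x,\bar c_0) \to \psi(\bar x,\bar w;\bar b)) \in \hat p$, where $\theta$ is a $C_0$-formula defining $R$; this is a $C_0$-definable condition on $\bar b$ because $\hat p$ is $C_0$-definable. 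Letting $\psi$ vary shows $\tp(\acl^{eq}(C_0 A^*)/\mathbb{U})$ is $C_0$-definable. The main obstacle is the crux: the device of passing to the class $S$, on which $g(\cdot,\bar a)$ is constant and whose code is simultaneously algebraic over $C_0\bar a$ and lies in $\mathbb{U}^{eq}$ — hence, by genericity, in $C_0$ — is the crucial point; the rest is routine bookkeeping with the conditions of Lemma-Definition~\ref{indep1} and with codes in $T^{eq}$.
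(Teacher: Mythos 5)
Your overall route is genuinely different from the paper's. The paper works with condition~(4) of Lemma-Definition~\ref{indep1}: it fixes a small model $M \supseteq BC$ with $\tp(A/M)$ $C$-definable, and shows directly that $\tp(\acl(AC)/M)$ is $C$-definable by the following trick. Given an $\acl(AC)$-definable $X$, take its finitely many $AC$-conjugates $X_1,\dots,X_n$, form the $AC$-definable finite-index equivalence relation $E$ (``same intersection pattern''), observe $E\cap M$ is $C$-definable because $\tp(AC/M)$ is, so each of its finitely many classes is $C$-definable since $\acl(C)=C$, and $X\cap M$ is a union of such classes. You instead pass to a bigger monster, take $A^*$ realizing a global $C_0$-definable extension, and aim to show $\tp(\acl^{eq}(C_0A^*)/\mathbb{U})$ is $C_0$-definable via a genericity fact and a ``crux'' about $\acl^{eq}\cap\dcl^{eq}$.

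Unfortunately there is a gap in your proof of the crux. You define $S = \{\bar z \in \mathbb{U} : g(\bar z,\bar a) = d\}$ and identify it as an $E$-class for a $C_0$-definable relation $E$ obtained from the $\chi$-definition of $\hat p$, where $\chi(\bar z_1,\bar z_2;\bar x)$ is $g(\bar z_1,\bar x)=g(\bar z_2,\bar x)$. The code $\ulcorner S\urcorner$ in your first description is $\bar m/E$, which indeed lies in $\dcl^{eq}(C_0\bar m)\subseteq\mathbb{U}^{eq}$. But the second description, ``$S$ is definable over $\bar a d$,'' refers to a \emph{different} definable set $T := \{\bar z : g(\bar z,\bar a)=d\}$ in the bigger monster. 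The two formulas $E(\bar z,\bar m)$ and $g(\bar z,\bar a)=d$ agree on $\mathbb{U}$-points (because $\bar a\models\hat p\restriction\mathbb{U}$ and $\bar m\in\mathbb{U}$), but need not agree on the bigger monster, since $\bar a$ realizes $\hat p$ only over $\mathbb{U}$ and not over the larger parameter set. Hence $\ulcorner\tilde S\urcorner$ (the code of the $E$-class) and $\ulcorner T\urcorner$ are a priori distinct imaginaries, and your argument that $\ulcorner S\urcorner\in\acl^{eq}(C_0\bar a)$ applies only to $\ulcorner T\urcorner$, while the genericity fact applies only to $\ulcorner\tilde S\urcorner$. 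The conclusion of the crux is very plausibly true, but the argument given does not establish it. This is precisely the kind of subtlety the paper's proof sidesteps by never leaving the small model $M$: there the equivalence relation $E\cap M$ is a $C$-definable relation \emph{on} $M$, its finitely many classes are coded in $\acl^{eq}(C)=C$ directly, and no issue of extending formulas to a larger monster arises. To repair your approach one would need to replace $E$ with a $C_0$-definable relation that provably has at most $|\Theta|$ classes globally, where $\Theta$ is a finite $C_0\bar a$-definable set containing $d$, and then still justify that the value of $g(\cdot,\bar a)$ is determined by the class; the second step is not automatic.
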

\begin{proof}
Replacing $C$ with $\acl(C)$, we may assume that $C = \acl(C)$.  By condition (4) of Lemma-Definition~\ref{indep1}, there is a small model $M$ containing $BC$ such that $\tp(A/M)$ is $C$-definable.  We need to show that $\tp(\acl(AC)/M)$ is $C$-definable.  This is equivalent to showing that for each $\acl(AC)$-definable set $X$, there is some $C$-definable set $X'$ such that $X \cap M = X' \cap M$.

Given such an $X$, let $X_1, \ldots, X_n$ be the conjugates of $X$ over $AC$.  Let $E$ be the equivalence relation
\[ x E y \leftrightarrow \bigwedge_{i = 1}^n (x \in X_i \leftrightarrow y \in X_i)\]
Then $E$ is $AC$-definable.  Since $\tp(AC/M)$ is $C$-definable, the restriction $E' = E \cap M$ of $E$ to $M$ is $C$-definable.  Since $E$ has finitely many equivalence classes, so does $E'$, and hence each equivalence class of $E'$ is $C$-definable, as $\acl(C) = C$.  But $X \cap M$ is a union of finitely many $E'$-equivalence classes, so $X \cap M$ is $C$-definable.
\end{proof}

\begin{lemma}\label{indep2-trans}
If $a_1 \forkindep^{\textrm{adef}}_C B$ and $a_2 \forkindep^{\textrm{adef}}_{Ca_1} B$, then $a_2a_1 \forkindep^{\textrm{adef}}_C B$, so $\forkindep^{\textrm{adef}}$ satisfies left-transitivity.
\end{lemma}
\begin{proof}
By Lemma~\ref{models-acl}, we know that $\acl(a_1 C) \forkindep^{\textrm{def}}_{\acl(C)} B$.  We are given $a_2 \forkindep^{\textrm{def}}_{\acl(Ca_1)} B$.  Combining these using Lemma~\ref{indep1-trans}, we conclude that $a_2 \acl(Ca_1) \forkindep^{\textrm{def}}_{\acl(C)} B$.  This easily implies $a_2a_1 \forkindep^{\textrm{def}}_{\acl(C)} B$, as desired.
\end{proof}

\section{An Abstract Criterion for Elimination of Imaginaries}
We state a sufficient condition for a theory $T$ to have elimination of imaginaries, extracted from \cite{udi}.
\begin{theorem}\label{ei-criterion}
Let $T$ be a theory, with home sort $K$ (meaning $\mathbb{M}^{eq} = \dcl^{eq}(K)$).  Let $\mathcal{G}$ be some collection of sorts.  If the following conditions all hold, then $T$ has elimination of imaginaries in the sorts $\mathcal{G}$.
\begin{itemize}
\item For every non-empty definable set $X \subseteq K^1$, there is an $\acl^{eq}(\ulcorner X \urcorner)$-definable type in $X$.
\item Every definable type in $K^n$ has a code in $\mathcal{G}$ (possibly infinite).  That is, if $p$ is any (global) definable type in $K^n$, then the set $\ulcorner p \urcorner$ of codes of the definitions of $p$ is interdefinable with some (possibly infinite) tuple from $\mathcal{G}$.
\item Every finite set of finite tuples from $\mathcal{G}$ has a code in $\mathcal{G}$.  That is, if $S$ is a finite set of finite tuples from $\mathcal{G}$, then $\ulcorner S \urcorner$ is interdefinable with a tuple from $\mathcal{G}$.
\end{itemize}
\end{theorem}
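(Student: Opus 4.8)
The plan is to follow the classical route by which ``weak elimination of imaginaries plus coding of finite sets'' yields full elimination of imaginaries, with the twist that weak elimination of imaginaries will be obtained by coding definable \emph{types} rather than definable sets directly. So the skeleton is: (i) reduce to coding definable subsets of the $K^n$; (ii) reduce that in turn to producing, for each such set, a finite tuple from $\mathcal{G}$ that lies in $\acl^{eq}$ of, and is definable from, the set's code; and (iii) build that tuple out of the definition-codes of suitable $\acl^{eq}$-definable types concentrating on the set.

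First I would dispatch the reductions. Since $\mathbb{U}^{eq} = \dcl^{eq}(K)$, an arbitrary imaginary has the form $f(a)$ for an $\emptyset$-definable partial function $f$ and a real tuple $a \in K^n$, and $f(a)$ is interdefinable with the code of its fiber $X = f^{-1}(f(a)) \subseteq K^n$; so it suffices to code every nonempty definable $X \subseteq K^n$. Next, it is enough to find, for each such $X$, a finite tuple $c$ from $\mathcal{G}$ with $c \subseteq \acl^{eq}(\ulcorner X \urcorner)$ and $\ulcorner X \urcorner \in \dcl^{eq}(c)$. For then the finitely many $\ulcorner X \urcorner$-conjugates $c = c_1, \dots, c_m$ of $c$ form a finite $\ulcorner X \urcorner$-definable set of tuples from $\mathcal{G}$, which by the third hypothesis has a code $d$ in $\mathcal{G}$; and $d$ and $\ulcorner X \urcorner$ are interdefinable, since $d \in \dcl^{eq}(\ulcorner X \urcorner)$ is immediate while $\ulcorner X \urcorner \in \dcl^{eq}(c_i)$ holds via the same $\emptyset$-definable map for every $i$, so any automorphism fixing $d$ (hence permuting the $c_i$) fixes $\ulcorner X \urcorner$. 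This step uses only the third hypothesis.

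Then comes the core. For a nonempty definable $X \subseteq K^n$ I would, by induction on $n$, attach to $X$ a nonempty finite $\ulcorner X \urcorner$-invariant family $\mathcal{P}_X$ of $\acl^{eq}(\ulcorner X \urcorner)$-definable global types concentrating on $X$ \emph{and}, crucially, arrange the same induction so that $\mathcal{P}_X$ (together with a residual definable set handled by the inductive hypothesis) recovers $\ulcorner X \urcorner$. The existence half of $\mathcal{P}_X$ comes from the first hypothesis applied to a fiber: project $X$ to $Y \subseteq K^{n-1}$, apply the inductive hypothesis to produce $\acl^{eq}(\ulcorner Y \urcorner) \subseteq \acl^{eq}(\ulcorner X \urcorner)$-definable types on $Y$, realize one of them at a point $b$, apply the first hypothesis to $X_b \subseteq K^1$ (whose code lies in $\dcl^{eq}(\ulcorner X \urcorner b)$), and reassemble a type on $K^n$ via the pushforward and tensor product $\otimes$ of Section~2 and the left-transitivity of Lemma~\ref{basic-transitivity}; the finitely many conjugates of the fiber's type-data over $\ulcorner X \urcorner b$, and the gap between $\acl^{eq}(\ulcorner X\urcorner b)$ and $\acl^{eq}(\ulcorner X\urcorner)b$, are what force ``a finite family'' in place of a single type. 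Once $\mathcal{P}_X$ is in hand, each $p \in \mathcal{P}_X$ has $\ulcorner p \urcorner \in \acl^{eq}(\ulcorner X \urcorner)$, and by the second hypothesis $\ulcorner p \urcorner$ is interdefinable with a tuple from $\mathcal{G}$ whose coordinates therefore lie in $\mathcal{G} \cap \acl^{eq}(\ulcorner X \urcorner)$; the finite union of these tuples over $p \in \mathcal{P}_X$ (and over the residual set), after the third hypothesis tidies up the finite sets involved, supplies the tuple $c$ demanded in the previous paragraph.

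The step I expect to be the real obstacle is the clause ``$\mathcal{P}_X$ recovers $\ulcorner X \urcorner$'' — i.e. weak elimination of imaginaries itself — as opposed to merely exhibiting types that happen to live on $X$. The types alone do not in general pin $X$ down: in ACVF the generic type of a ball does not remember which ``swiss cheese'' inside that ball was meant. The naive argument — an automorphism fixing $\mathcal{G} \cap \acl^{eq}(\ulcorner X \urcorner)$ fixes every $\acl^{eq}(\ulcorner X \urcorner)$-definable type, hence fixes $X$ — fails because such an automorphism can carry $\ulcorner X \urcorner$ far outside $\acl^{eq}(\ulcorner X \urcorner)$, so one cannot apply the first hypothesis to the symmetric difference $X \triangle \sigma(X)$. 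My way around this is to make the induction on $n$ do double duty as a peeling argument: split off from $X$ the portion ``controlled'' by a single type $p \in \mathcal{P}_X$, leaving a residual definable set that is lower-dimensional or governed by strictly fewer coordinates, to which the inductive hypothesis applies; NIP is then what should guarantee that only finitely many peeling steps occur, so that $\ulcorner X \urcorner$ is reconstructed from a finite amount of data drawn from $\mathcal{G} \cap \acl^{eq}(\ulcorner X \urcorner)$. Pinning down the correct notion of ``complexity'' that decreases, and verifying that the peeled-off piece really is controlled by the coded type, is where the genuine work lies.
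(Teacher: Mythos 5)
Your skeleton and your reading of the three hypotheses are right, and your induction for producing an $\acl^{eq}(\ulcorner X\urcorner)$-definable type in $X$ (project, handle the fiber via the first hypothesis, reassemble via left-transitivity) is essentially the paper's Claim. But the obstacle you flag as ``the real work'' --- that an $\acl^{eq}(\ulcorner X\urcorner)$-definable type $p$ concentrating on $X$ need not determine $\ulcorner X\urcorner$, illustrated by the generic type of a ball forgetting the swiss cheese --- is spurious, and the peeling/NIP machinery you invent to fix it is not needed.

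The point you are missing is that you should not try to code an arbitrary definable set $X$; you only need to code $e$ where $e$ is the code of an $E$-equivalence class for a $\emptyset$-definable equivalence relation $E$ (equivalently, $e = f(a)$ and $X = f^{-1}(e)$ for a $\emptyset$-definable $f$). For such $X$, recovering $e$ from the type $p$ is automatic: $f$ is $\emptyset$-definable and constant on the realizations of $p$, so $e = f(x)$ for any $x \models p$, hence $e \in \dcl^{eq}(\ulcorner p\urcorner)$. In the paper's phrasing, ``$X$ is the unique $E$-equivalence class in which $p$ lives,'' so $e = \ulcorner X\urcorner \in \dcl^{eq}(\ulcorner p\urcorner)$. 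Once you have $e \in \dcl^{eq}(\ulcorner p\urcorner)$ and $\ulcorner p\urcorner \in \acl^{eq}(e)$, the second hypothesis replaces $\ulcorner p\urcorner$ with a (possibly infinite) tuple from $\mathcal{G}$, compactness extracts a finite subtuple $t$ still satisfying $e \in \dcl^{eq}(t)$ and $t \in \acl^{eq}(e)$, and the third hypothesis codes the finite set of $e$-conjugates of $t$, exactly as you outline in your second paragraph. In short: your step (i) reduction is slightly too aggressive. You wrote ``so it suffices to code every nonempty definable $X \subseteq K^n$,'' throwing away the structure that $X$ is a fiber of a $\emptyset$-definable map; keeping that structure makes the ``recover $\ulcorner X\urcorner$ from $p$'' step free, and there is nothing left for a peeling argument or NIP to do. (The theorem as stated does not assume NIP, so a proof that leans on it would also be proving less than claimed.)
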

\begin{proof} Assume the three conditions.
\begin{claim}
For every non-empty definable set $X \subseteq K^n$, there is an $\acl^{eq}(\ulcorner X \urcorner)$-definable type in $X$.
\end{claim}
\begin{claimproof}
We proceed by induction on $n$, the base case $n = 1$ being given.  Suppose $n > 1$.  Take $X \subseteq K^n$.  Let $C = \ulcorner X \urcorner$.  Let $\pi : K^n \twoheadrightarrow K^{n-1}$ be the projection onto the first $n - 1$ coordinates.  Then $\pi(X)$ is $C$-definable, so by induction, there is an $\acl^{eq}(C)$-definable type in $\pi(X)$.  Let $a_1$ realize this type.  Then $a_1 \forkindep^{\textrm{adef}}_C C$.

Let $Y = \{y \in K^1 | (a_1,y) \in X\}$, so $Y$ is essentially $X \cap \pi^{-1}(a_1)$.  Then $Y$ is $Ca_1$-definable and non-empty.  By assumption, there is an $\acl^{eq}(Ca_1)$-definable type in $Y$.  Let $a_2$ realize this type; then $a_2 \in Y$ and $a_2 \forkindep^{\textrm{adef}}_{Ca_1} C$.  Since $a_1 \forkindep^{\textrm{adef}}_C C$, it follows that $a_1a_2 \forkindep^{\textrm{adef}}_C C$ by Lemma~\ref{indep2-trans}.  By definition of $\forkindep^{\textrm{adef}}$, there is an $\acl^{eq}(C)$-definable type $p(x_1,x_2)$ such that $a_1a_2 \models p | \acl^{eq}(C)$.  As $a_2 \in Y$, the tuple $a_1a_2$ is in $X$, so $p$ is an $\acl^{eq}(C)$-definable type in $X$.
\end{claimproof}

Let $e$ be any imaginary.  Then there is some $n$ and some 0-definable equivalence relation $E$ on $K^n$ such that $e$ is a code for some $E$-equivalence class $X$.  By the claim, there is an $\acl^{eq}(e)$-definable type $p$ in $X$.  Then $e \in \dcl^{eq}(\ulcorner p \urcorner)$, because $X$ is the unique $E$-equivalence class in which the type $\ulcorner p \urcorner$ lives.  By the second assumption, there is some small tuple $t_0 \subseteq \mathcal{G}$ such that $\ulcorner p \urcorner$ is interdefinable with $t_0$.  Thus $e \in \dcl^{eq}(t_0)$ and $t_0 \in \acl^{eq}(e)$.  By compactness, we can find some finite tuple $t$ from $\mathcal{G}$ such that $e \in \dcl^{eq}(t)$ and $t \in \acl^{eq}(e)$.  Write $e$ as $f(t)$ for some 0-definable function $f$.  Let $S$ be the (finite) set of conjugates of $t$ over $e$.  Then $S$ is $e$-definable.  Moreover, $f(t') = e$ for any $t' \in S$, so $e$ is $\ulcorner S \urcorner$-definable.  Hence $e$ and $\ulcorner S \urcorner$ are interdefinable.  By the third hypothesis, $\ulcorner S \urcorner$ has a code in $\mathcal{G}$.  So $e$ has a code in $\mathcal{G}$.  As $e$ was arbitrary, $T$ has elimination of imaginaries down to the sorts in $\mathcal{G}$.
\end{proof}

The conditions in the theorem are sufficient but not necessary for elimination of imaginaries to hold.  Namely, the first condition has nothing to do with $\mathcal{G}$, and happens to fail in $\mathbb{Q}_p$, even if $\mathcal{G}$ is chosen to be all of $\mathbb{Q}_p^{eq}$.

\subsection{Examples}
We sketch how to use Theorem~\ref{ei-criterion} to verify elimination of imaginaries in ACF and DCF$_0$ in the home sort $K$ (so $\mathcal{G}$ is merely $\{K\}$).  The first condition follows from stability: if $X$ is any non-empty definable set, then the formula $x \in X$ does not fork over $\ulcorner X \urcorner$.  If $p$ is a global type which does not fork over $\ulcorner X \urcorner$ and contains this formula, then $p$ is an $\acl^{eq}(\ulcorner X \urcorner)$-definable type in $X$.

For the second condition, one must check that every type has a code (possibly infinite) in the home sort.  If $p$ is a type in $K^n$, then there is a minimal Zariski-closed or Kolchin-closed set $V$ containing $p$, and $p$ and $V$ have the same code.  The second condition thus reduces to coding Zariski-closed sets or Kolchin-closed sets, respectively.  So does the third condition, since any finite subset of $K^n$ is Zariski-closed and Kolchin-closed.  Now, to code a Zariski-closed or Kolchin-closed set $V$, we merely need to code the ideal $I$ of polynomials or differential polynomials which vanish on $V$.  In the ACF case, this reduces to coding, for each $d < \omega$, the intersection of $I$ with the space of degree $\le d$ polynomials in $K[X_1,\ldots,X_n]$.  Something similar happens in DCF.  So the problem reduces to coding linear subspaces of $K^m$ for various $m$.

But this is doable, by the following basic and general fact:
\begin{lemma}\label{linear-codes}
Let $K$ be any field.  Let $V$ be a subspace of $K^n$.  Then $V$ can be coded by a tuple in $K$, and $V$ and $K^n/V$ have $\ulcorner V \urcorner$-definable bases.
\end{lemma}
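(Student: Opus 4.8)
The plan is to use the reduced row echelon form, which attaches to $V$ a canonical basis whose entries constitute the desired tuple from $K$. Fix $n$ and let $V \subseteq K^n$ with $d = \dim V$. For $0 \le j \le n$ write $\pi_j \colon K^n \to K^j$ for the projection onto the first $j$ coordinates. I would call $j \in \{1,\dots,n\}$ a \emph{pivot} of $V$ when $\dim \pi_j(V) > \dim \pi_{j-1}(V)$, and let $S = S(V)$ be the set of pivots. Two elementary observations drive everything. First, since $\pi_{j-1}(V)$ is the image of $\pi_j(V)$ under a linear map with one-dimensional kernel, $\dim \pi_j(V)$ rises by $0$ or $1$ at each step, and it runs from $0$ to $d$; so $|S| = d$. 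Second, if $0 \ne v \in V$ and $j$ is the position of its leftmost nonzero coordinate, then $\dim \pi_j(V) > \dim \pi_{j-1}(V)$, so $j$ is a pivot; hence every nonzero element of $V$ has a nonzero entry in a pivot coordinate, i.e.\ $\pi_S \colon V \to K^S$ is injective, and therefore (as $|S| = d = \dim_K K^S$) bijective.

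Given this, order $S = \{j_1 < \dots < j_d\}$. Bijectivity of $\pi_S|_V$ provides, for each $i$, a unique $v_i \in V$ whose $j_i$-th coordinate is $1$ and whose $j_{i'}$-th coordinate is $0$ for $i' \ne i$; the $v_i$ form a basis of $V$ (the rows of the echelon matrix). I would then take $\bar t = (v_1, \dots, v_d)$, a tuple from $K$. Since each $v_i$ is characterized by a condition definable from $\ulcorner V \urcorner$ (namely ``$x \in V$ and $\pi_S(x)$ is the $i$-th standard basis vector of $K^S$'', and $S$ is itself definable from $V$), we get $\bar t \in \dcl^{eq}(\ulcorner V \urcorner)$; conversely $V$ is the $K$-span of $\bar t$ (and $S$ is read off as the columns carrying the identity pattern), so $\ulcorner V \urcorner \in \dcl^{eq}(\bar t)$. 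Thus $\bar t$ and $\ulcorner V \urcorner$ are interdefinable, which exhibits a code for $V$ in the home sort. The one cosmetic point is that the length of $\bar t$ depends on $\dim V$; this is handled by partitioning the set of subspaces into its finitely many $\emptyset$-definable pieces indexed by the pivot set $S$, on each of which $V \mapsto \bar t$ is a uniform definable injection into a fixed power of $K$.

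For the bases: $(v_1, \dots, v_d)$ is already a $\bar t$-definable, hence $\ulcorner V \urcorner$-definable, basis of $V$. For $K^n / V$, set $T = \{1,\dots,n\} \setminus S$ and let $e_j$ denote the standard basis vectors. A nonzero vector of $V$ cannot be supported on $T$, since its leftmost nonzero coordinate lies in $S$; so $V \cap \operatorname{span}(e_j : j \in T) = 0$, and a dimension count gives $K^n = V \oplus \operatorname{span}(e_j : j \in T)$. Hence $\{e_j + V : j \in T\}$ is a basis of $K^n / V$, and it is $\ulcorner V \urcorner$-definable because $T$ is.

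The whole argument is routine linear algebra carried out definably, so I do not expect a genuine obstacle; the only care required is the bookkeeping of checking that $S(V)$, the $v_i$, and $T$ involve no choices beyond $\ulcorner V \urcorner$, together with the variable-length point noted above. (If one prefers to avoid echelon forms, an alternative is to code the line $\bigwedge^d V \subseteq \bigwedge^d K^n$ by its unique representative whose first nonzero coordinate is $1$, and to recover $V$ as $\{ w : w \wedge \omega = 0 \}$ for $\omega$ a generator of $\bigwedge^d V$; but extracting the bases from the Plücker data then reduces to the same computation, so the echelon approach is the more direct one.)
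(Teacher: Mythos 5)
Your proof is correct and follows essentially the same approach as the paper: choose a coordinate projection onto a $d$-element subset of coordinates that restricts to an isomorphism on $V$, pull back the standard basis to get a basis of $V$, and use the complementary coordinates for $K^n/V$. The one respect in which your version is more careful is that you make the choice of coordinate set canonical (the pivot set of the reduced row echelon form), which is needed for the resulting basis to actually be $\ulcorner V \urcorner$-definable rather than merely $\ulcorner V \urcorner$-algebraic; the paper's phrasing (``there is some coordinate projection'') leaves this implicit.
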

\begin{proof}
Let $m = \dim V$.  By linear algebra, there is some coordinate projection $\pi : K^n \to K^m$ such that the restriction of $\pi$ to $V$ is an isomorphism $V \to K^m$.  Then the preimage of the standard basis under this isomorphism is a $\ulcorner V \urcorner$-definable basis for $V$.  This basis is a code for $V$.  Meanwhile, if we push the standard basis of $K^n$ forward to $K^n/V$, then some subset of this will be a basis for $K^n/V$, and will be definable over the parameters (such as $\ulcorner V \urcorner$) that were used to interpret the set $K^n/V$.
\end{proof}

For the case of ACVF, the coding of definable types will be done similarly.  But in addition to coding subspaces of $K^n$, we will also need to code definable ways of turning $K^n$ into a valued $K$-vector space.  The third condition of Theorem~\ref{ei-criterion} will be verified \emph{using} the coding of definable types.

\section{Elimination of imaginaries in ACVF}
In this section, we prove that ACVF has elimination of imaginaries in the sorts $K$, $R_{n,\ell}$, by applying Theorem~\ref{ei-criterion}.  Recall that we are referring to these as the geometric sorts.  We say that an object has a \emph{geometric code} if it has a code in these sorts.

\subsection{Coding modules}
Recall that $R_{n,\ell}$ is the set of pairs $(\Lambda,V)$ where $\Lambda$ is a lattice in $K^n$ and $V$ is an $\ell$-dimensional subspace of $\res \Lambda := \Lambda \otimes_{\mathcal{O}} k = \Lambda / \mathfrak{M} \Lambda$.

For fixed $\Lambda$, the poset of $k$-subspaces of $\res \Lambda$ is isomorphic to the poset of $\mathcal{O}$-submodules between $\mathfrak{M} \Lambda$ and $\Lambda$.  Moreover, $\ell$-dimensional subspaces correspond exactly to $\mathcal{O}$-submodules isomorphic to $\mathcal{O}^\ell \times \mathfrak{M}^{n - \ell}$.  So we could
equivalently define $R_{n,\ell}$ to be the set of all $\mathcal{O}$-submodules of $K^n$ isomorphic to $\mathcal{O}^\ell \times \mathfrak{M}^{n - \ell}$.  Under this identification,
\[ \bigcup_{\ell = 0}^n R_{n,\ell}\]
is the space of all open bounded definable $\mathcal{O}$-submodules of $K^n$, by Theorem~\ref{module-classification}.

In section \ref{module-classify}, we saw that $Mod_n$, the set of \emph{all} definable submodules of $K^n$, is interpretable.  We now show that $Mod_n$ can be embedded into the geometric sorts.
\begin{lemma}\label{code-modules}
If $M \le K^n$ is a definable submodule of $K^n$, then $M$ has a geometric code.
\end{lemma}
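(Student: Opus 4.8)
The plan is to reduce, via Theorem~\ref{module-classification}, the coding of an arbitrary definable submodule $M \le K^n$ to coding the three standard ``building blocks'' ($K$-factors, $\mathcal{O}$-factors, and $\mathfrak{M}$-factors) in a way that respects the geometric sorts. First I would invoke Theorem~\ref{module-classification} to get a $K$-linear isomorphism $g : K^n \to K^n$ carrying $M$ to a standard form $K^{n_1} \times \mathcal{O}^{n_2} \times \mathfrak{M}^{n_3}$, but of course $g$ itself is not canonical, so I cannot simply code $g$. Instead I would extract from $M$ three canonical $\mathcal{O}$-submodules intrinsically attached to $M$: the largest $K$-subspace $W \subseteq M$ (this is $\bigcap_{\alpha \in K^\times} \alpha M$ up to the obvious adjustment, or rather the union of all $K$-lines in $M$), the $K$-span $\langle M \rangle$ of $M$, and then the ``bounded part'' of $M$ inside $\langle M \rangle / W$. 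Each of these is $\ulcorner M \urcorner$-definable.

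The key steps, in order: (1) Code the $K$-subspaces $W \subseteq \langle M \rangle \subseteq K^n$ by tuples from $K$, using Lemma~\ref{linear-codes}; this also gives $\ulcorner M \urcorner$-definable bases for $W$, for $\langle M \rangle$, and for the quotient $\langle M \rangle / W$, so we may identify $\langle M \rangle / W$ with some $K^{n'}$ definably over $\ulcorner M \urcorner$. (2) Pass to the image $\bar M$ of $M$ in $\langle M \rangle / W \cong K^{n'}$; by construction $\bar M$ contains no nontrivial $K$-subspace and spans, so by the proof of Theorem~\ref{module-classification} it is a \emph{bounded open} definable submodule, i.e.\ (after the coordinate identification) it is sandwiched between two lattices and is isomorphic to $\mathcal{O}^\ell \times \mathfrak{M}^{n' - \ell}$ relative to \emph{some} separating basis. (3) Here is where one must be careful: $\bar M$ as an abstract submodule of $K^{n'}$ need not literally be of the form $(\Lambda, V) \in R_{n', \ell}$ for the \emph{standard} lattice structure — but the valuation on $K^{n'}$ induced by $\bar M$ (as in the proof of Theorem~\ref{module-classification}, $\val(v) = \sup\{\val(\alpha) : v \in \alpha \bar M\}$) makes $K^{n'}$ into a valued vector space, and the unit ball $\Lambda_{\bar M} := \{v : \val(v) \ge 0\}$ of that valuation is a lattice, $\mathfrak{M}\Lambda_{\bar M} \subseteq \bar M \subseteq \Lambda_{\bar M}$, and $\bar M / \mathfrak{M}\Lambda_{\bar M}$ is a subspace of $\res \Lambda_{\bar M}$. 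Both $\Lambda_{\bar M}$ and this subspace are $\ulcorner \bar M \urcorner$-definable, so $(\Lambda_{\bar M}, \bar M/\mathfrak{M}\Lambda_{\bar M})$ is an element of $R_{n', \ell}$ that is $\ulcorner \bar M \urcorner$-definable, and conversely it determines $\bar M$. (4) Finally assemble: $M$ is determined by, and definable from, the triple consisting of $\ulcorner W \urcorner$, $\ulcorner \langle M \rangle \urcorner$ (tuples from $K$), and the element of $R_{n',\ell}$ just constructed — using the $\ulcorner M \urcorner$-definable bases from step (1) to transport everything back into $K^n$. So $\ulcorner M \urcorner$ is interdefinable with a tuple from the geometric sorts.

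I expect the main obstacle to be step (3): showing that the lattice $\Lambda_{\bar M}$ genuinely exists as a lattice (not merely a definable module) and is $\ulcorner \bar M \urcorner$-definable, and that the whole construction of $\Lambda_{\bar M}$ from $\bar M$ is uniform/definable rather than just existing after passing to a spherically complete model. The cleanest route is probably to observe that $\Lambda_{\bar M}$ can be described directly as $\{v \in K^{n'} : v \in \mathcal{O} \cdot \bar M\}$ (or the analogous intersection), which is manifestly $\ulcorner \bar M \urcorner$-definable, and then check — possibly after passing to an elementarily equivalent spherically complete model, which is harmless since being a lattice is a first-order property once one has a candidate generating set — that this is indeed a lattice sitting between $\bar M$ and $\mathfrak{M}^{-1}\bar M$. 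A secondary annoyance is the bookkeeping of step (1) and step (4): one must make sure the identification $\langle M \rangle / W \cong K^{n'}$ and the reassembly are done by $\ulcorner M \urcorner$-definable linear maps, which Lemma~\ref{linear-codes} delivers but which should be stated explicitly to keep the argument honest.
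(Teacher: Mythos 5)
Your proposal is correct and is essentially the paper's proof: both extract the canonical $K$-subspaces $V^-\subseteq V^+$ (your $W\subseteq\langle M\rangle$), code them by a tuple from $K$ via Lemma~\ref{linear-codes}, definably identify $V^+/V^-$ with $K^m$, and observe that the image of $M$ there is an open bounded definable submodule, hence a point of $R_{m,\ell}$. Your step (3) is already done for you by the paragraph preceding the lemma, which identifies $\bigcup_\ell R_{m,\ell}$ with the open bounded definable submodules of $K^m$ via $\Lambda=\{v:\mathfrak{M}v\subseteq\bar M\}$; note your candidate $\{v:v\in\mathcal{O}\cdot\bar M\}$ is a slip (that set is just $\bar M$), but your valuation description $\{v:\val(v)\ge 0\}$ is the right one.
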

\begin{proof}
Let $V^+$ be the $K$-span of $N$, and let $V^-$ be the maximal $K$-subspace of $K^n$ contained in $N$.  By Lemma~\ref{linear-codes}, the subspaces $V^+$ and $V^-$ can be coded by a tuple $c$ from $K$, and the quotient $V^+/V^-$ has a $c$-definable identification with $K^m$, for some $m$.  Then $N$ is interdefinable over $c$ with the image of $N/V^-$ in $K^m$.  But this image will be an open bounded definable submodule, so it is an element of $R_{m,\ell}$ for some $\ell$.
\end{proof}

\subsection{Coding definable types}\label{sec:contribution}
A definable type in $K^n$ induces an ideal $I$ in $K[X_1,\ldots,X_n]$ together with the structure of a valued $K$-vector space on the quotient $K[\vec{X}]/I$.  By quantifier elimination in the one-sorted language, these data completely determine the type.  So the problem of finding codes for definable types reduces to the (easy) problem of coding subspaces, and the problem of coding valued vector space structures on $K$-vector spaces.\footnote{We will be more explicit in the proof of Theorem~\ref{code-definable} below.}

At the risk of being overly pedantic\ldots
\begin{definition}
Let $V$ be a $K$-vector space.  A \emph{VVS structure} on $V$ is a binary relation $R$ on $V$ such that there is a valued $K$-vector space structure $(V,\Gamma(V),\ldots)$ on $V$ for which $xRy \iff \val(x) \le \val(y)$.
\end{definition}
The VVS structures on $V$ are essentially the distinct ways of turning
$V$ into a valued $K$-vector space.  Two valued $K$-vector spaces $W$
and $W'$ with the same underlying vector space $V$ yield the same VVS
structure on $V$ iff they are identical up to an isomorphism of value
groups $\Gamma(W) \cong \Gamma(W')$.

If $V$ is a definable $K$-vector space, it makes sense to say that a VVS structure $R$ is ``definable,'' meaning that $R$ is a definable subset of $V \times V$.  If $R$ is definable, then $\Gamma(V)$ is interpretable and the map $\val : V \to \Gamma(V)$ and the action of $\Gamma(K)$ on $\Gamma(V)$ are all definable.

\begin{theorem}\label{coding-ways}
Let $\tau$ be (the code for) a definable VVS structure on $K^m$.  Then $\tau$ is interdefinable with an element of the geometric sorts.
\end{theorem}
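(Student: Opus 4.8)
\emph{Proof proposal.} Write $V := K^m$, regarded as a valued $K$-vector space via $\tau$; recall that the value space $\Gamma(V)$, the map $\val : V\setminus\{0\}\to\Gamma(V)$, and the action of $\Gamma$ on $\Gamma(V)$ are all interpretable from $\tau$. The plan is to show that $\tau$ is interdefinable with a short tuple of definable $\mathcal{O}$-submodules of $K^m$, and then invoke Lemma~\ref{code-modules}. The key observation is that $\tau$ is already interdefinable with the chain $\mathcal{B} := \{\,B_\delta : \delta\in\Gamma(V)\,\}$ of closed balls about the origin in $V$, where $B_\delta := \{v\in V : \val(v)\ge\delta\}$. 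Each $B_\delta$ is a definable $\mathcal{O}$-submodule of $K^m$; we have $B_\delta=\{v\in V : v_0\mathrel{R}v\}$ for any $v_0$ with $\val(v_0)=\delta$ (so $\mathcal{B}$ is $\ulcorner\tau\urcorner$-definable), and conversely $v\mathrel{R}w$ holds iff every member of $\mathcal{B}$ containing $v$ contains $w$ (so $\tau$ is recovered from $\mathcal{B}$).

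Next I would cut $\mathcal{B}$ into finitely many pieces. Since $\alpha B_\delta=B_{\delta+\val(\alpha)}$ for $\alpha\in K^\times$, and since by Remark~\ref{few-orbits} the group $\Gamma$ acts on $\Gamma(V)$ with at most $m$ orbits $O_1,\dots,O_r$, the chain $\mathcal{B}$ is the union of the $r$ sets $\mathcal{B}_i:=\{B_\delta : \delta\in O_i\}$, each of which is a single $K^\times$-orbit of submodules of $K^m$. Each $O_i$ is a definable torsor over the o-minimal group $\Gamma$, hence has a definable point $\delta_i$; ordering the $O_i$ by $\delta_1<\dots<\delta_r$ makes the finite set of orbits $\ulcorner\tau\urcorner$-definably linearly ordered. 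Setting $N_i:=B_{\delta_i}$, we obtain $\ulcorner\tau\urcorner$-definable $\mathcal{O}$-submodules $N_1,\dots,N_r$ of $K^m$, and $\tau$ is interdefinable with the tuple $(N_1,\dots,N_r)$: indeed $\{\alpha N_i : 1\le i\le r,\ \alpha\in K^\times\}=\mathcal{B}$, so $v\mathrel{R}w$ iff for each $i$ and each $\alpha\in K^\times$, $v\in\alpha N_i$ implies $w\in\alpha N_i$.

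To finish, apply Lemma~\ref{code-modules} to each $N_i$: this gives a geometric code $\ulcorner N_i\urcorner$, and an inspection of the (uniform) proof of that lemma—split off the maximal $K$-subspace inside $N_i$ and its $K$-span, coding them in $K$ by Lemma~\ref{linear-codes}, and land the remaining open bounded module in some $R_{m',\ell}$ via Theorem~\ref{module-classification}—shows $\ulcorner N_i\urcorner$ depends definably on $N_i$. Hence $\tau$ is interdefinable with the geometric tuple $(\ulcorner N_1\urcorner,\dots,\ulcorner N_r\urcorner)$ (pad with dummies to length $m$ if one wants a uniform sort), which is exactly the assertion of the theorem.

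The substantive content is really just Remark~\ref{few-orbits} together with Lemma~\ref{code-modules}; the delicate point—and the main obstacle—is the passage from ``each individual ball $B_\delta$ has a geometric code'' to ``the whole $K^\times$-orbit $\mathcal{B}_i$ of balls has a geometric code,'' i.e.\ producing the canonical representatives $N_i$. I handle this above by trivializing the torsors $O_i$; an alternative that sidesteps any statement about $\Gamma$-torsors is to code the orbit $\mathcal{B}_i$ itself directly, using that a multiplicative translate $\alpha N$ of a submodule has the same endomorphism ring $\{g\in M_m(K):gN\subseteq N\}$ (a definable submodule of $K^{m^2}$, so geometrically coded by Lemma~\ref{code-modules}), which for $N$ a lattice pins down the $K^\times$-orbit since the normalizer of $M_m(\mathcal{O})$ in $GL_m(K)$ is $K^\times\cdot GL_m(\mathcal{O})$—the general case then reducing to the lattice case plus the elementary coding of subspaces of residue-field vector spaces.
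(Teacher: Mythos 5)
Your reduction of $\tau$ to the chain of balls $\{B_\delta\}$ and then to finitely many $\tau$-definable $\mathcal{O}$-submodules, handled by Lemma~\ref{code-modules}, is exactly the paper's strategy. The genuine gap is at the step ``Each $O_i$ is a definable torsor over the o-minimal group $\Gamma$, hence has a definable point $\delta_i$.'' This does not follow from o-minimality of $\Gamma$. Definable choice in $\Gamma$ gives definable points only for definable \emph{subsets of powers of $\Gamma$}, whereas the orbits $O_i$ live in the interpretable quotient $\Gamma(V) = (K^m \setminus \{0\})/\!\sim$, which is not a priori in $\tau$-definable bijection with any subset of $\Gamma^k$. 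In fact, showing that $O_i$ has a $\tau$-definable point is equivalent to showing that $O_i$ is $\tau$-definably isomorphic to $\Gamma$, which is precisely the thing to be proved; there is no general principle that interpretable $\Gamma$-torsors in ACVF are trivial (if there were, much of the apparatus of the geometric sorts would be superfluous). This is the substantive content of the theorem: the paper devotes its entire Claim to it, introducing the relations $\sim$ and $\approx$ on $\Gamma(V)$, building the $\tau$-definable filtration $0 = V_0 \subset V_1 \subset \cdots \subset V_\ell = V$, invoking Lemma~\ref{linear-codes} to extract a $\tau$-definable nonzero vector in each quotient $V_i/V_{i-1}$ and hence a $\tau$-definable element of each $\sim$-class $C_i$, and only then using $\approx$ together with o-minimality of $\Gamma$ to upgrade these to orbit representatives.

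Your sketched alternative via endomorphism rings does not close this gap either. When $\Gamma(V)$ has more than one $\Gamma$-orbit, the intermediate balls $B_\delta$ are generally not lattices but modules of the form $\mathcal{O}^a \times \mathfrak{M}^b$ with $0 < b < m$ (after a change of basis), so the normalizer fact you cite for $M_m(\mathcal{O})$ does not apply directly. The proposed ``reduction to the lattice case plus the elementary coding of subspaces of residue-field vector spaces''---presumably passing to the smallest lattice $\Lambda^+ \supseteq B_\delta$ together with the subspace $B_\delta/\mathfrak{M}\Lambda^+ \subseteq \res \Lambda^+$---produces a $K^\times$-orbit of pairs in some $R_{m,\ell}$, and one would still need to explain why such an orbit has a $\tau$-definable code, which is not obviously easier than the original problem. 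In both lines of your argument, the passage from ``finitely many $\Gamma$-orbits in $\Gamma(V)$'' to ``those orbits have $\tau$-definable representatives or codes'' is asserted rather than proved, and that passage is where essentially all the work of the theorem resides.
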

\begin{proof}
Let $V$ be the associated valued $K$-vector space.  So $K^m$ is the underlying vector space of $V$ and $\tau$ is a code for the relation $\val(x) \le \val(y)$.  The set $\Gamma(V)$ is $\tau$-interpretable, as $K^m \setminus 0$ modulo the equivalence relation $\val(x) \le \val(y) \wedge \val(y) \le \val(x)$.

By Remark~\ref{few-orbits}, $\Gamma(V)$ consists of finitely many orbits under $\Gamma(K)$.

If there was only one orbit, and if there was a canonical identification of $\Gamma(V)$ with $\Gamma(K)$, we could proceed as follows: let $B$ be the closed ball around 0 with valuative radius 0.  The other closed balls around 0 are all of the form $\alpha B$, for $\alpha \in K$.
The set $B$ is a definable $\mathcal{O}$-submodule of $K^m$, so it has a geometric code.  It determines $\tau$, however, because $\val(x) \le \val(y)$ if and only if every closed ball containing 0 and $x$ contains $y$.  So $\tau$ and $\ulcorner B\urcorner$ would be interdefinable.

In general we have several orbits.  The first order of business is finding a $\tau$-definable element in each one:
\begin{claim}
Each orbit of $\Gamma(K)$ on $\Gamma(V)$ contains a $\tau$-definable element.
\end{claim}
\begin{claimproof}
For $x, y \in \Gamma(V)$, let $x \gg y$ indicate that $\Gamma(K) + x > \Gamma(K) + y$.  Let $x \sim y$ indicate that $x \not \gg y$ and $y \not \gg x$.  This is an equivalence relation.  Each orbit of $\Gamma(K)$ is in one $\sim$-equivalence class, so there are finitely many $\sim$-equivalence classes.  Each $\sim$-equivalence class is a convex subset of the linear order $\Gamma(V)$.  Let $C_1 > C_2 > \ldots > C_\ell$ be the distinct $\sim$-equivalence classes sorted in order from most positive to most negative.

For $0 \le i \le \ell$, let $V_i$ be the set of $v \in V$ such that $\val(v) \in C_j$ for some $j \le i$.  Each $V_i$ is a $K$-vector space, yielding an ascending filtration
\[ 0 = V_0 \subseteq V_1 \subseteq \cdots \subseteq V_\ell = V.\]
On $V_i \setminus V_{i-1}$, the function $\val$ lands in $C_i$ and factors through the quotient $V_i / V_{i-1}$, by the ultrametric inequality.

The equivalence relation $\sim$ is $\tau$-definable.  Since there are finitely many equivalence classes and they are totally ordered, each $C_i$ is $\tau$-definable.  Consequently, each $V_i$ is $\tau$-definable.  By Lemma~\ref{linear-codes}, each quotient space $V_i / V_{i-1}$ has a $\tau$-definable basis.  In particular, there is a $\tau$-definable non-zero vector in $V_i / V_{i-1}$.  Taking its valuation, we get a $\tau$-definable element of $C_i$.  We have shown:
\begin{equation}
\text{Each $C_i$ contains a $\tau$-definable element.} \label{halfway}
\end{equation}

Next, for $x, y \in \Gamma(V)$ let $x \approx y$ indicate that $x + \gamma > y > x - \gamma$ for all positive $\gamma \in \Gamma(K)$.  This is again a $\tau$-definable equivalence relation.  If $x$ and $y$ are in the same orbit, but are not equal, then $x \not \approx y$.  Consequently, each $\approx$-equivalence class contains at most one point from each orbit, so each $\approx$-equivalence class is finite.  This implies that if $x \approx y$, then $x$ and $y$ are interalgebraic over $\tau$.  In light of the total ordering, they are actually inter\emph{definable} over $\tau$.

Let $x$ be an arbitrary element of $\Gamma(V)$.  We will show that the orbit $\Gamma(K) + x$ contains a $\tau$-definable element.  By (\ref{halfway}), $x \sim y$ for some $\tau$-definable element $y$.  The set
\[ \{\gamma \in \Gamma(K) : \gamma + x \le y\}\]
is non-empty, because $x \not \gg y$, and it is bounded above, because $y \not \gg x$.  It is also definable, so it has a supremum $\gamma_0$, by o-minimality of $\Gamma(K)$.  Then $\gamma_0 + x \approx y$.  The element $\gamma_0 + x$ is interdefinable over $\tau$ with the $\tau$-definable element $y$, so it is itself $\tau$-definable.
\end{claimproof}
Given the claim, let $\gamma_1, \ldots, \gamma_n$ be a set of $\tau$-definable orbit representatives.  Let $B_i = \{v \in K^m : \val(v) \ge \gamma_i\}$.  Each $B_i$ is a $\tau$-definable $\mathcal{O}$-submodule of $K^m$, i.e., an element of $Mod_m$.  The closed balls of $V$ containing 0 are exactly the sets of the form $\alpha B_i$ for $1 \le i \le n$ and $\alpha \in K$.  The family of closed balls containing 0 is enough to determine the VVS structure, so $\tau$ is interdefinable with the tuple $(B_1,\ldots,B_n)$.  But by Lemma~\ref{code-modules}, each $B_i$ has a geometric code.
\end{proof}

\begin{theorem}\label{code-definable}
Let $p(x)$ be a definable type in $K^n$.  Then $p(x)$ has a code in the geometric sorts.
\end{theorem}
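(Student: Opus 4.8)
The plan is to reduce coding a definable type $p(x)$ in $K^n$ to the two problems we have already solved: coding a subspace of a $K$-vector space (Lemma~\ref{linear-codes}) and coding a definable vvs structure (Theorem~\ref{coding-ways}). The starting observation is the one already flagged in \S\ref{sec:contribution}: by quantifier elimination in the one-sorted language, a definable type $p(x)$ on $K^n$ is completely determined by the pair of data consisting of the ideal $I_p = \{f \in K[X_1,\dots,X_n] : (\text{``}f(x) = 0\text{''}) \in p\}$ together with, for each pair $f, g$, whether $(\text{``}\val(f(x)) \le \val(g(x)')) \in p$. I would first make this precise: show that $I_p$ is a definable-over-$\ulcorner p \urcorner$ ideal in the sense that for each degree bound $d$, the finite-dimensional slice $I_p \cap K[\vec X]_{\le d}$ is a $\ulcorner p \urcorner$-definable subspace of $K[\vec X]_{\le d}$, and that the relation $R_d$ on $(K[\vec X]_{\le d}/I_p\cap K[\vec X]_{\le d})$ given by ``$\val(f(x)) \le \val(g(x))$ holds in $p$'' is a well-defined $\ulcorner p \urcorner$-definable vvs structure on that finite-dimensional quotient. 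Well-definedness (that the truth value depends only on the classes mod $I_p$, and that the ultrametric and scaling axioms hold) is a routine consequence of the fact that $p$ is a complete type: e.g. if $f \equiv f' \bmod I_p$ then $\val(f(x)) = \val(f'(x))$ belongs to $p$, etc.

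Next I would assemble the codes. For each $d$, Lemma~\ref{linear-codes} gives a code in $K$ for the subspace $I_p \cap K[\vec X]_{\le d}$, and by that lemma the quotient $Q_d := K[\vec X]_{\le d}/(I_p \cap K[\vec X]_{\le d})$ has a $\ulcorner p \urcorner$-definable basis, hence a $\ulcorner p \urcorner$-definable identification with some $K^{m_d}$. Transporting $R_d$ across this identification yields a definable vvs structure $\tau_d$ on $K^{m_d}$, which by Theorem~\ref{coding-ways} is interdefinable with an element of the geometric sorts. The (infinite) tuple $(\ulcorner I_p \cap K[\vec X]_{\le d}\urcorner, \ulcorner \tau_d \urcorner)_{d < \omega}$ then lives in the geometric sorts, and I claim it is interdefinable with $\ulcorner p \urcorner$. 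One direction is immediate: each component is $\ulcorner p\urcorner$-definable by construction. For the other direction I must check that $p$ is recovered from this tuple --- this is exactly quantifier elimination: any formula in $p(x)$ is, up to $p$, equivalent to a boolean combination of conditions ``$f(x) = 0$'' and ``$\val(f(x)) \le \val(g(x))$'' for polynomials $f,g$, and for any fixed such $f, g$ there is a $d$ with $f, g \in K[\vec X]_{\le d}$, so the truth value in $p$ is read off from $\ulcorner I_p \cap K[\vec X]_{\le d}\urcorner$ and $\ulcorner\tau_d\urcorner$. Strictly, I should note that the chosen basis for $Q_d$ need not be compatible across different $d$; but this causes no problem, since I only need each $\tau_d$ individually, and compatibility of the identifications is not required for the interdefinability argument.

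The main obstacle I anticipate is the bookkeeping around quantifier elimination: making rigorous the claim that every formula in $p$ is $p$-equivalent to a boolean combination of equalities and valuation-comparisons of polynomials, and that this suffices to reconstruct $p$ from the slice data. The subtlety is that QE in the one-sorted language produces formulas using the \emph{binary} predicate $\val(x) \ge \val(y)$ applied to terms, and terms in $K[\vec X]$ evaluated at $x$ are exactly polynomials; so a quantifier-free formula $\phi(x; c)$ with parameters $c$ is a boolean combination of atoms ``$f(x, c) = 0$'' and ``$\val(f(x,c)) \ge \val(g(x,c))$'' where now $f, g \in K[\vec X, \vec c]$, i.e. after plugging in $c$, polynomials in $\vec X$ over $K$. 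That is fine, but I should be careful that the parameters $c$ over which a typical formula in $p$ is written range over the monster, not over $\ulcorner p\urcorner$; this is harmless because the $\phi$-definition of $p$ is $\ulcorner p\urcorner$-definable and what we reconstruct is precisely the family of $\phi$-definitions. A secondary, more technical point is verifying that $R_d$ genuinely is a vvs structure (not merely a preorder) on $Q_d$ --- i.e. that there is an honest valued $K$-vector space structure inducing it; here one uses that $p$ is consistent, so realizing $p$ in an elementary extension and evaluating polynomials gives a concrete valued $K$-vector space whose induced comparison relation is exactly $R_d$. Once these points are nailed down, the theorem follows by combining Lemma~\ref{linear-codes} and Theorem~\ref{coding-ways} as above.
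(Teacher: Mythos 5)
Your proposal is correct and takes essentially the same approach as the paper: for each degree bound $d$, code the slice $I_d$ of the vanishing ideal via Lemma~\ref{linear-codes}, transport the induced comparison relation to a vvs structure on a power of $K$ via a $\ulcorner p\urcorner$-definable basis of the quotient, code it via Theorem~\ref{coding-ways}, and observe that QE lets one reconstruct $p$ from the family of all these codes. The additional well-definedness and bookkeeping checks you flag are accurate and are what the paper leaves implicit.
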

\begin{proof}
For each $d$, let $V_d$ be the space of polynomials in $K[X_1,\ldots,X_n]$ of degree $\le d$.  This is a finite dimensional definable $K$-vector space with a 0-definable basis.  Let $I_d$ be the set of $Q(X) \in V_d$ such that the formula $Q(x) = 0$ is in $p(x)$.  Let $R_d$ be the set of pairs $(Q_1(X),Q_2(X))$ in $V_d \times V_d$ such that the formula $\val(Q_1(x)) \le \val(Q_2(x))$ is in $p(x)$.  Then $I_d$ is a subspace of $V_d$, and $R_d$ induces a definable VVS structure on the quotient space $V_d/I_d$.  Quantifier elimination in the one-sorted language implies that $p$ is completely determined by the collection of all $I_d$'s and $R_d$'s for $d < \omega$.  By Lemma~\ref{linear-codes}, we can find codes $\ulcorner I_d \urcorner$ in the home sort for the $I_d$'s.  After naming these codes, each quotient space $V_d/I_d$ has a definable basis, and can be definably identified with some power of $K$.  Then each $R_d$ is interdefinable with a definable VVS structure on a power of $K$.  By Theorem~\ref{coding-ways}, these VVS structures have codes $c_d$ in the geometric sorts.  Now the union of all the $\ulcorner I_d \urcorner$'s and $c_d$'s is a code for $p$.
\end{proof}

\subsection{Coding finite sets}\label{sec:horror}
In this section, we show that finite sets of tuples from the geometric
sorts can be coded in the geometric sorts.  We make use of the
existence of geometric codes for definable types.  For a more
elementary but more complicated approach, see Proposition 3.4.1 in
\cite{HHM}.

\begin{definition}
If $X$ is some set, $\Sym^n X$ will denote the $n$-fold symmetric
product of $X$, that is, $X^n$ modulo the action of the $n$th
symmetric group.  The natural map $X^n \to \Sym^n X$ will be denoted
$\sigma$, so that
\[ \sigma(x_1,\ldots,x_n) = \sigma(y_1,\ldots,y_n)\]
if and only if there is a permutation $\pi$ of $n$ such that $x_i =
y_{\pi(i)}$ for $i = 1,\ldots,n$.
\end{definition}

\begin{definition}
  A 0-definable map $\pi : X \to Y$ has \emph{definable lifting} if
  for every $b \in Y$ there is a $b$-definable type $p_b$ in
  $\pi^{-1}(b)$.  (In particular, $\pi$ must be surjective.)  Say that
  $\pi$ has \emph{generically stable lifting} if it has definable
  lifting and $p_b$ can be taken to be generically stable.
\end{definition}

In both cases, we can easily modify the map $b \mapsto p_b$ to be
automorphism equivariant, so that if $\sigma \in
\Aut(\mathbb{M}/\emptyset)$, then $p_{\sigma(b)} = \sigma(p_b)$ for
every $b$.  Conversely, if there is an automorphism equivariant map $b
\mapsto p_b$ from elements of $Y$ to definable (resp. generically
stable) types in $X$, such that $p_b$ is in $\pi^{-1}(b)$, then $\pi$
has definable (resp. generically stable) lifting---the automorphism
invariance ensures that $p_b$ is $b$-definable.

If $\pi : X \to Y$ has definable lifting, and $q$ is a $C$-definable
type in $Y$ for some parameters $C$, then $q = \pi_*p$ for some
$C$-definable type $p$ in $X$.  Indeed, if $M$ is a model containing
$C$, and $b$ realizes $q | M$, and $a \in \pi^{-1}(b)$ realizes $p_b |
Mb$, then $a \forkindep^{\textrm{def}}_{Cb} M$ and $b
\forkindep^{\textrm{def}}_C M$, so $a \forkindep^{\textrm{def}}_C M$
by left-transitivity of $\forkindep^{\textrm{def}}$.  Thus $a \models
p|M$ for some $C$-definable type $p$.  By definition of pushforward,
$\pi(a) = b$ realizes $\pi_* p | M$.  Then the $C$-definable types
$\pi_* p$ and $q$ have the same restriction to a model containing $C$,
so they are equal.

From this, we draw the following conclusion:
\begin{observation}
If $\pi : X \to Y$ has definable lifting, and definable types in $X$
have codes in the geometric sorts, then so do definable types in $Y$.
\end{observation}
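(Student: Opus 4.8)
The plan is to deduce a geometric code for a global definable type $q$ in $Y$ from a geometric code for a suitably chosen preimage type $p$ in $X$ with $\pi_* p = q$. First I would fix, once and for all, an automorphism-equivariant assignment $b \mapsto p_b$ sending each $b \in Y$ to a $b$-definable type concentrated on $\pi^{-1}(b)$, as provided by the hypothesis of definable lifting. Given $q$, set $C = \dcl^{eq}(\ulcorner q \urcorner)$, so that $q$ is $C$-definable, and run the construction appearing just before this Observation: pick a small model $M \supseteq C$, a realization $b \models q \,|\, M$, and a realization $a \in \pi^{-1}(b)$ of $p_b \,|\, Mb$. Left-transitivity of $\forkindep^1$ gives $a \forkindep^1_C M$, so $\tp(a/M)$ has a (unique) global $C$-definable extension $p$, and $\pi_* p = q$ by the defining property of pushforward.

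The heart of the argument is that $p$ depends only on $q$, i.e.\ $\ulcorner p \urcorner$ and $\ulcorner q \urcorner$ are interdefinable. For one direction: $\tp(b/M) = q\,|\,M$ and $\tp(a/Mb) = p_b\,|\,Mb$ together determine $\tp(ab/M)$, hence $\tp(a/M) = p\,|\,M$, hence $p$ (as the unique $C$-definable global extension of its own restriction to a model over $C$); so $p$ is unaffected by the auxiliary choices of $M$, $b$, $a$. Moreover any automorphism of $\mathbb{U}$ fixing $\ulcorner q \urcorner$ fixes $q$, fixes $C = \dcl^{eq}(\ulcorner q \urcorner)$ pointwise, and commutes with the equivariant assignment $b \mapsto p_b$; it therefore carries the construction of $p$ to itself and fixes $p$. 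Thus $\ulcorner p \urcorner \in \dcl^{eq}(\ulcorner q \urcorner)$. Conversely, since $\pi$ is $\emptyset$-definable and pushing a definable type forward along a $\emptyset$-definable map is a $\emptyset$-definable operation on definition schemes, $\ulcorner q \urcorner = \ulcorner \pi_* p \urcorner \in \dcl^{eq}(\ulcorner p \urcorner)$.

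Once interdefinability is established, the conclusion is immediate: $p$ is a definable type in $X$, so by hypothesis $\ulcorner p \urcorner$ is interdefinable with a (possibly infinite) tuple from the geometric sorts, and hence so is $\ulcorner q \urcorner$. The one point requiring care is the canonicity just discussed—namely that the lifted type $p$ is literally $\ulcorner q \urcorner$-definable rather than merely $\acl^{eq}(\ulcorner q \urcorner)$-definable—and this is exactly where the automorphism-equivariance of $b \mapsto p_b$, together with the fact that $\tp(a/M)$ is pinned down by $q\,|\,M$ and $p_b$, is used. Everything else is a routine unwinding of the definitions of pushforward, codes, and $\forkindep^1$.
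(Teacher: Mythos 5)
Your argument is correct and follows the paper's own route: lift $q$ to a $\ulcorner q\urcorner$-definable type $p$ on $X$ with $\pi_* p = q$ using left-transitivity of $\forkindep^1$, then note $\ulcorner p\urcorner$ and $\ulcorner q\urcorner$ are interdefinable and conclude. One remark: the canonicity/automorphism-equivariance detour in your second paragraph is unnecessary (and a little delicate at the step where you assert $p$ is independent of the choice of $M$). The first paragraph already produces $p$ as a $C$-definable type for $C = \dcl^{eq}(\ulcorner q\urcorner)$, and $C$-definability by itself gives $\ulcorner p\urcorner \subseteq \dcl^{eq}(\ulcorner q\urcorner)$ with no further argument; the equivariance of $b \mapsto p_b$ plays no role here, and the worry about landing only in $\acl^{eq}(\ulcorner q\urcorner)$ is already handled by the fact that the construction uses $\forkindep^1$ rather than $\forkindep^2$.
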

Indeed, let $q$ be a definable type in $Y$.  Then $q$ is $\ulcorner q
\urcorner$-definable, so $q = \pi_* p$ for some $\ulcorner q
\urcorner$-definable type $p$.  But then $p$ and $q$ are
interdefinable, and $p$ has a geometric code.

We make two other useful observations:
\begin{observation}\label{obs:products}
  If $\pi : X \to Y$ and $\pi' : X' \to Y'$ both have definable
  lifting, then so does the product map $\pi \times \pi' : X \times X'
  \to Y \times Y'$.  Indeed, if $f$ and $f'$ are the
  automorphism-equivariant maps witnessing definable lifting, then
  $(b,b') \mapsto f(b) \otimes f'(b')$ witnesses definable lifting for
  the product map $\pi \times \pi'$.  The same statement holds with
  ``generically stable lifting'' in place of ``definable lifting,'' by Theorem~\ref{gs}(a).
\end{observation}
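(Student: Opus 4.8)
The plan is to use the automorphism-equivariant reformulation of definable lifting noted just after the definition. So fix automorphism-equivariant maps $f$ from $Y$ to the definable types in $X$ and $f'$ from $Y'$ to the definable types in $X'$ such that $f(b)$ concentrates on $\pi^{-1}(b)$ and $f'(b')$ concentrates on $\pi'^{-1}(b')$. I will show that $g(b,b') := f(b) \otimes f'(b')$ witnesses definable lifting for $\pi \times \pi'$, from which the conclusion follows by the observation that an automorphism-equivariant assignment of definable types automatically produces $b$-definable (here $(b,b')$-definable) types.

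The verification breaks into three routine points. First, $g(b,b')$ is a definable type, since the tensor product of two definable types is definable. Second, $g(b,b')$ concentrates on $(\pi \times \pi')^{-1}(b,b') = \pi^{-1}(b) \times \pi'^{-1}(b')$: if $(a,a') \models g(b,b') | B$ for a small $B$ containing $b$ and $b'$, then unwinding the defining property of $\otimes$ gives $a' \models f'(b') | B$ and $a \models f(b) | Ba'$, so $a \in \pi^{-1}(b)$ and $a' \in \pi'^{-1}(b')$, hence $(a,a') \in (\pi \times \pi')^{-1}(b,b')$. Third, $g$ is automorphism-equivariant: for $\sigma \in \Aut(\mathbb{U}/\emptyset)$ one has $g(\sigma b, \sigma b') = f(\sigma b) \otimes f'(\sigma b') = \sigma(f(b)) \otimes \sigma(f'(b')) = \sigma\big(f(b) \otimes f'(b')\big) = \sigma(g(b,b'))$, where the second equality is equivariance of $f$ and $f'$ and the third is the fact that applying an automorphism commutes with forming the tensor product of definable types (immediate from the characterization of $\otimes$).

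I expect no genuine obstacle here; the entire argument is bookkeeping with the elementary closure properties of definable types recorded in the section on operations on definable types (that products of definable types are definable, and that $\sigma(p \otimes q) = \sigma(p) \otimes \sigma(q)$). I would additionally remark that the same proof shows $\pi \times \pi'$ has generically stable lifting whenever $\pi$ and $\pi'$ do, using Theorem~\ref{gs}(a) in place of the bare statement that products of definable types are definable.
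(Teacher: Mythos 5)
Your proposal is correct and matches the paper's argument exactly: the paper gives the same one-line recipe, that $(b,b') \mapsto f(b) \otimes f'(b')$ witnesses definable lifting for $\pi \times \pi'$, and leaves the routine verification (that this is a definable type concentrating on the right fiber, and that the assignment is automorphism-equivariant) to the reader. Your remark that the same argument gives the generically stable version via Theorem~\ref{gs}(a) is also in the paper, stated just afterward as ``the analogue of Observation~\ref{obs:products} still holds'' for generically stable lifting.
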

\begin{observation}\label{obs:gen-es}
If $\pi : X \to Y$ has generically stable lifting, then so does
$\Sym^n \pi : \Sym^n X \to \Sym^n Y$.
Indeed, suppose that $b \mapsto p_b$ is the automorphism equivariant
map from elements of $Y$ to generically stable types in $X$.  Then
\[ \sigma(b_1,\ldots,b_n) \mapsto \sigma_*(p_{b_1} \otimes \cdots \otimes p_{b_n})\]
is a well-defined automorphism-equivariant map from elements of
$\Sym^n Y$ to generically stable types in $\Sym^n X$, witnessing
generically stable lifting for $\Sym^n \pi$.  Generic stability ensures that
\[ \sigma_*(p_{b_1} \otimes \cdots \otimes p_{b_n}) = \sigma_*(p_{b_{\pi(1)}} \otimes \cdots \otimes p_{b_{\pi(n)}})\]
for any $b_1,\ldots,b_n$ and any permutation $\pi$ of
$\{1,\ldots,n\}$.
\end{observation}

\begin{lemma}\label{better-version}
  Definable lifting holds for the map $\Sym^n(K^m \times \Oo^\ell) \to
  \Sym^n(K^m \times k^\ell)$, induced by the componentwise residue
  $\Oo^\ell \to k^\ell$.
\end{lemma}
\begin{proof}
  First note that the residue map $\res : \mathcal{O} \to k$ has
  definable lifting: to each residue $\alpha \in k$ we associate the
  generic type of the open ball $\res^{-1}(\alpha)$.  By
  Observation~\ref{obs:products}, the componentwise residue map $\Oo^n
  \to k^n$ has definable lifting.  It follows that the map $\Sym^n \Oo
  \to \Sym^n k$ has definable lifting.  Indeed, there is a commutative
  diagram
  \begin{equation*}
    \xymatrix{ \Sym^n \Oo \ar[d] \ar[r]^\sim  & \Oo^n \ar[d] \\ \Sym^n k \ar[r]^\sim & k^n}
  \end{equation*}
  in which the vertical maps are induced by the residue map $\Oo \to
  k$, and the horizontal maps are induced by the elementary symmetric
  polynomials.  These horizontal maps are bijections because $k$ and
  $K$ are algebraically closed and $\Oo$ is integrally closed.  So
  definable lifting holds for $\Sym^n \Oo \to \Sym^n k$.

  Now consider the map $\Sym^n(K^m \times \Oo^\ell) \to \Sym^n(K^m
  \times k^\ell)$.  Sweeping the set/multiset distinction under the
  rug, let $S$ be a finite subset of $K^m \times k^\ell$.  Let $T
  \subseteq k$ be the set of all elements of $k$ appearing in $S$.  By
  definable lifting of $\Sym^i \Oo \to \Sym^i k$, there is a finite
  subset $T' \subseteq \Oo$ such that $\ulcorner T' \urcorner
  \forkindep^{\textrm{def}}_{\ulcorner S \urcorner} \ulcorner S
  \urcorner$ and such that $T'$ lifts $T$, meaning that the residue
  map $\Oo \to k$ restricts to a bijection $T' \to T$.  Let $f : T \to
  T'$ be the inverse, a partial section of the residue map.  Let $S'
  \subseteq K^m \times \Oo^\ell$ be obtained by applying $f$
  componentwise to $S$.  Then $S'$ maps onto $S$ via the componentwise
  residue map $K^m \times \Oo^\ell \to K^m \times k^\ell$.  Moreover,
  $S'$ is definable over $\ulcorner S \urcorner \ulcorner T'
  \urcorner$, and so
  \begin{equation*}
    \ulcorner S' \urcorner \forkindep^{\textrm{def}}_{\ulcorner S \urcorner} \ulcorner S \urcorner.
  \end{equation*}
  Therefore $\ulcorner S' \urcorner \in \Sym^n(K^m \times \Oo^\ell)$
  lifts $\ulcorner S \urcorner \in \Sym^n(K^m \times k^\ell)$, and
  realizes an $\ulcorner S \urcorner$-definable type.
\end{proof}
\begin{corollary}\label{some-cor}
  Definable types in $\Sym^n(K^m \times k^\ell)$ have geometric codes.
\end{corollary}
\begin{proof}
  The set $\Sym^n(K^m \times \Oo^\ell)$ embeds into $\Sym^n(K^{m +
    \ell})$, which 0-definably embeds into some power $K^N$ by
  elimination of imaginaries in ACF.  By Theorem~\ref{code-definable},
  definable types in $K^N$ have geometric codes.  Therefore, definable
  types in $\Sym^n(K^m \times \Oo^\ell)$ have geometric codes.  By
  Observation~\ref{obs:products} and Lemma~\ref{better-version}, the
  Corollary follows.
\end{proof}

\begin{theorem}\label{finite-codes}
Let $G$ be any geometric sort.  Then elements of $\Sym^n G$ have
geometric codes.
\end{theorem}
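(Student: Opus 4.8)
The plan is to reduce to the lemma just proved — that definable types in $\Sym^n(K^m\times k^\ell)$ have geometric codes — by exhibiting, for each geometric sort $G$, a $\emptyset$-definable surjection $\rho_G\colon X_G\to G$ with generically stable lifting such that $X_G$ embeds $\emptyset$-definably into $K^a\times k^b$ for suitable $a,b$. For $G=K$ we take $\rho_G=\mathrm{id}_K\colon K\to K$, which has generically stable lifting because constant types are generically stable. For $G=R_{m,\ell}$ we take the canonical $\emptyset$-definable surjection $\rho_G\colon\widetilde{R_{m,\ell}}\to R_{m,\ell}$, $(\vec b,\Lambda,V)\mapsto(\Lambda,V)$, which has generically stable lifting since the fibre over $(\Lambda,V)$ — the set of bases of $\Lambda$ — is filled, automorphism-equivariantly, by the generically stable type $p_\Lambda^{\otimes m}$. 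That $\widetilde{R_{m,\ell}}$ embeds $\emptyset$-definably into $K^{m^2}\times k^{m'}$ for a suitable $m'$ is seen as follows: a basis $\vec b$ of a lattice $\Lambda\subset K^m$ is an $m$-tuple of vectors of $K^m$, i.e.\ a point of $K^{m^2}$, and it determines an isomorphism $\res\Lambda\cong k^m$ under which the $\ell$-dimensional subspace $V$ becomes a $k$-point of the Grassmannian of $\ell$-planes in $k^m$, which by elimination of imaginaries in ACF embeds $\emptyset$-definably into some $k^{m'}$.

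Granting this, fix a geometric sort $G$ and an element $s\in\Sym^n G$; we must produce a geometric code for $s$. By Observation~\ref{obs:gen-es}, the map $\Sym^n\rho_G\colon\Sym^n X_G\to\Sym^n G$ inherits generically stable — in particular definable — lifting. Since $X_G$ embeds $\emptyset$-definably into $K^a\times k^b$, the symmetric product $\Sym^n X_G$ embeds $\emptyset$-definably into $\Sym^n(K^a\times k^b)$; pushing definable types forward along this embedding and invoking the preceding lemma, every definable type in $\Sym^n X_G$ has a geometric code. By the transfer principle for definable lifting (a $\emptyset$-definable surjection with definable lifting carries geometric codes of definable types from its domain to its codomain), every definable type in $\Sym^n G$ has a geometric code. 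Finally, the realized type $\tp(s/\mathbb{U})$ — the type ``$x=s$'' — is a $\{s\}$-definable, hence definable, type in $\Sym^n G$, and its code is interdefinable with $s$; so $s$ has a geometric code.

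I do not expect a genuine obstacle: all the real work — generically stable lifting for $\widetilde{R_{m,\ell}}\to R_{m,\ell}$, its preservation under $\Sym^n$, the transfer of codes along maps with definable lifting, and the coding lemma for $\Sym^n(K^m\times k^\ell)$ — has been set up beforehand. The one point that needs care is the $\emptyset$-definable embedding of $\widetilde{R_{m,\ell}}$ into $K^{m^2}\times k^{m'}$, which rests on the residue-field Grassmannian being coded in $k$, i.e.\ on elimination of imaginaries in ACF for the residue sort. The same argument, run with $\rho_{G_1}\times\cdots\times\rho_{G_r}$ in place of a single $\rho_G$ (using that generically stable lifting is preserved under finite products, the analogue of Observation~\ref{obs:products}), yields the slightly stronger statement that finite subsets of any finite product of geometric sorts have geometric codes, which is the form in which this is applied in Theorem~\ref{ei-criterion}.
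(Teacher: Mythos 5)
Your proposal is correct and follows the paper's argument essentially verbatim: reduce to $K$ and $R_{m,\ell}$ via products, use the generically-stable-lifting maps $\mathrm{id}_K$ and $\widetilde{R_{m,\ell}}\to R_{m,\ell}$, embed $\widetilde{R_{m,\ell}}$ into $K^{m^2}\times k^{m'}$ via the basis and the Grassmannian, apply Observation~\ref{obs:gen-es} and the lemma on $\Sym^n(K^m\times k^\ell)$, transfer codes along the lifting, and conclude by looking at the constant type of $s$. Your closing remark about products of geometric sorts makes explicit a point the paper glosses over (it tacitly treats $G$ as a finite product of basic sorts), but it is the same argument.
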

\begin{proof}
We claim that there is some map $G' \to G$ with generically stable
lifting, such that $G'$ embeds (0-definably) into a product $K^m
\times k^\ell$.  Assuming this is true, we get a map
\[ \Sym^n(G') \to \Sym^n(G)\]
with generically stable (hence definable) lifting, and $\Sym^n(G')$
embeds into $\Sym^n(K^m \times k^\ell)$.  Corollary~\ref{some-cor} ensures
that definable types in $\Sym^n(G')$ have geometric codes, so by
definable lifting, definable types in $\Sym^n(G)$ have geometric
codes.  In particular, looking at constant types in $\Sym^n(G)$, we
see that elements of $\Sym^n(G)$ have geometric codes.

It remains to find $G' \to G$.  The property of generically stable
lifting is closed under taking products
(Observation~\ref{obs:products}), and $G$ is a product of $K$'s and
$R_{n,\ell}$'s, so it suffices to consider the case $G = K$ or $G =
R_{n,\ell}$.  For $G = K$, we take the identity map $G' = K \to K =
G$.  The case $G = R_{n,\ell}$ remains.  Let $\widetilde{R_{n,\ell}}$
be the set of triples $(\vec{b},\Lambda,V)$, where $\Lambda$ is a
lattice in $K^n$, $\vec{b}$ is a lattice basis, and $V$ is an
$\ell$-dimensional $k$-subspace of $\res \Lambda := \Lambda /
\mathfrak{M}\Lambda$.  Then the canonical map
\[ \widetilde{R_{n,\ell}} \to R_{n,\ell}\]
\[ (\vec{b},\Lambda,V) \mapsto (\Lambda,V)\]
has generically stable lifting.  Indeed, given $\Lambda$, any
realization of $p_\Lambda^{\otimes n}$ will be a basis of $\Lambda$,
where $p_\Lambda$ is the generic type of $\Lambda$.

Moreover, we can embed $\widetilde{R_{n,\ell}}$ into a
product of $K$'s and $k$'s.  Let $Gr_{n,\ell}$ denote the set of
$\ell$-dimensional $k$-subspaces of $k^n$.  Then there is a
0-definable map
\[ \widetilde{R_{n,\ell}} \to K^{n^2} \times Gr_{n,\ell}\]
\[ (\vec{b},\Lambda,V) \mapsto (\vec{b},W),\]
where $W$ is the image of $V$ under the identification of $\res
\Lambda$ with $k^n$ induced by the basis $\vec{b}$.  This map is an
injection, and $Gr_{n,\ell}$ can be embedded in a power of $k$ by
algebraic geometry, or elimination of imaginaries in ACF.
\end{proof}

\subsection{Putting everything together}

\begin{theorem}\label{main-theorem}
ACVF has elimination of imaginaries in the geometric sorts, i.e., in $K$ and the $R_{n,\ell}$.
\end{theorem}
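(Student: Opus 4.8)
The plan is to apply Theorem~\ref{ei-criterion} with $\mathcal{G}$ the collection of geometric sorts $K$ and $\{R_{n,\ell} : n > \ell \ge 0\}$. Since ACVF lives over a field, $\mathbb{U}^{eq} = \dcl^{eq}(K)$, so the hypothesis on the home sort is satisfied, and it remains only to check the three bulleted conditions. The second of these---every definable type in $K^n$ has a code in $\mathcal{G}$---is exactly Theorem~\ref{code-definable}, so nothing further is needed there.

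For the first condition, fix a non-empty definable $X \subseteq K^1$ and write $C = \ulcorner X \urcorner$. By $C$-minimality, $X$ is a finite disjoint union of swiss cheeses, and in the canonical such decomposition every ball involved is algebraic over $C$. Let $B$ be the outer ball of one of these swiss cheeses, say $B \setminus (B_1 \cup \cdots \cup B_m)$ with each $B_i$ a proper subball of $B$; then $\ulcorner B \urcorner \in \acl^{eq}(C)$. Take $p$ to be the generic type $p_B$ of $B$ (or, if $B$ happens to be a point, the realized type at that point). Since each $B_i$ is a proper subball of $B$, the type $p_B$ implies $x \notin B_i$, so $p$ concentrates on this swiss cheese, hence on $X$; and as recalled in~\S\ref{our-def-types}, $p_B$ is $\ulcorner B \urcorner$-definable, hence $\acl^{eq}(C)$-definable. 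This gives the first condition.

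For the third condition, let $S$ be a finite set of finite tuples from $\mathcal{G}$. There are only finitely many possible ``sort patterns'' among the elements of $S$---each tuple lies in some finite product $K^{a} \times R_{n_1,\ell_1} \times \cdots \times R_{n_r,\ell_r}$---so $S$ decomposes as a finite disjoint union $S_1 \sqcup \cdots \sqcup S_t$, where each $S_j$ is a finite subset of a single product $G_j$ of $K$'s and $R_{n,\ell}$'s. Then $\ulcorner S_j \urcorner$ is an element of $\Sym^{|S_j|} G_j$, which has a geometric code by Theorem~\ref{finite-codes}; and $\ulcorner S \urcorner$ is interdefinable with the tuple $(\ulcorner S_1 \urcorner, \ldots, \ulcorner S_t \urcorner)$, hence with a tuple from $\mathcal{G}$. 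This verifies the third condition, and Theorem~\ref{ei-criterion} yields the conclusion.

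The substantive work has already been carried out in Theorems~\ref{code-definable} and~\ref{finite-codes}, so I do not expect a serious obstacle here; the only points requiring a moment of care are the (standard) fact that the generic type of a ball avoids all of its proper subballs, so that it genuinely concentrates on the chosen swiss cheese, and the harmless bookkeeping needed to pass from ``elements of $\Sym^n G$ for a single product $G$ of geometric sorts'' in Theorem~\ref{finite-codes} to ``arbitrary finite sets of finite tuples from $\mathcal{G}$'' in the third hypothesis of Theorem~\ref{ei-criterion}.
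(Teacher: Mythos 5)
Your proposal matches the paper's proof essentially verbatim: both verify the three hypotheses of Theorem~\ref{ei-criterion} by citing Theorem~\ref{code-definable}, Theorem~\ref{finite-codes}, and the swiss-cheese/generic-ball argument via C-minimality. The only addition is your explicit bookkeeping for finite sets with mixed sort patterns in the third condition, which the paper leaves implicit (and which is in any case unneeded in the application, since there $S$ is a set of conjugates and hence lies in a single sort).
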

\begin{proof}
This follows by Theorem~\ref{ei-criterion}.  The second condition is Theorem~\ref{code-definable}.  The third condition is Theorem~\ref{finite-codes}.  The first condition of Theorem~\ref{ei-criterion} can be verified as follows: Let $D$ be a one-dimensional definable set.  Then $D$ can be written as a disjoint union of $\acl^{eq}(\ulcorner D \urcorner)$-definable ``swiss cheeses.''  If $B \setminus (B_1 \cup \cdots \cup B_n)$ is one of these swiss cheeses, then the generic type $p_B$ of $B$ is in $B \setminus (B_1 \cup \cdots \cup B_n)$, hence in $D$.  Since $B$ is $\acl^{eq}(\ulcorner D \urcorner)$-definable, so is $p_B$.
\end{proof}

\section{Reduction to the standard geometric sorts}\label{coda}
Haskell, Hrushovski, and Macpherson showed that ACVF has elimination of imaginaries in the sorts $K, S_n, T_n$.  To deduce this result from Theorem~\ref{main-theorem}, we need to code the $R_{m,\ell}$ sorts into the $S_n$ and $T_n$.  This is done in 2.6.4 of \cite{HHM}, but for the sake of completeness, we quickly recall the arguments here.  Recall that $S_n$ is the set of lattices in $K^n$, and $T_n$ is the union of $\res \Lambda$ as $\Lambda$ ranges over $S_n$.

First of all, we can easily code the $R_{m,\ell}$ in terms of $R_{n,0} (= S_n)$ and $R_{n,1}$ (which is roughly a projectivized version of $T_n$).  Indeed, if $\Lambda$ is a lattice in $K^n$, and $V$ is an $\ell$-dimensional subspace in $\res(V)$, then $V$ can be coded by a one-dimensional subspace (namely $\bigwedge^\ell V$) in
\[ \bigwedge^\ell \res(\Lambda) = \res(\bigwedge^\ell \Lambda),\]
and $\bigwedge^\ell \Lambda$ is a lattice in $\bigwedge^\ell K^n$.
So to code an element of $R_{n,\ell}$, we can use the underlying lattice in $R_{n,0}$, and then an element in $R_{N,1}$, where $N = \dim \bigwedge^\ell K^n$.

Now to code an element of $R_{n,1}$ in terms of the $S_n$ and $T_n$, we proceed by induction.  Let $\Lambda$ be a lattice in $K^n$.  Let $\pi$ be the projection onto the first coordinate.  Then $\pi(\Lambda)$ is free\footnote{Finitely generated torsion-free $\mathcal{O}$-modules are always free.}, so we have a split exact sequence
\[ 0 \to \Lambda' \to \Lambda \to \pi(\Lambda) \to 0\]
where $\Lambda'$ is a lattice in $K^{n-1}$.  Since this sequence is split exact, it remains split exact after tensoring with $k$.  So
\[ 0 \to \res(\Lambda') \to \res(\Lambda) \to \res(\pi(\Lambda)) \to 0\]
is exact.  Let $V$ be a one dimensional subspace of $\res(\Lambda)$.  If $V$ sits inside $\res(\Lambda')$, then $V$ is interdefinable with a one-dimensional subspace of $\res(\Lambda')$, so can be coded in the true geometric sorts by induction.

Otherwise, $V$ maps isomorphically onto the one-dimensional $k$-space $\res(\pi(\Lambda))$.  Then to code $V$, it suffices to code the inverse map $\res(\pi(\Lambda)) \to V \hookrightarrow \res(\Lambda)$.  But because all the $\mathcal{O}$-modules in sight are free,
\[ \Hom_k(\res(\pi(\Lambda)),\res(\Lambda)) = \Hom_k(\pi(\Lambda) \otimes k, \Lambda \otimes k) = \Hom_\mathcal{O}(\pi(\Lambda),\Lambda) \otimes k.\]
And $\Hom_\mathcal{O}(\pi(\Lambda),\Lambda)$ is a lattice in $\Hom_K(K,K^n) \cong K^n$.  So a map from $\res(\pi(\Lambda))$ to $\res(\Lambda)$ can be coded by an element of $T_n$.

\begin{acknowledgment}
The author would like to thank Tom Scanlon, Ehud Hrushovski, and the participants in the 2014 Berkeley Model Theory Seminar.
{\tiny Also, this material is based upon work supported by the National Science Foundation Graduate Research Fellowship under Grant No. DGE 1106400.  Any opinions, findings, and conclusions or recommendations expressed in this material are those of the author and do not necessarily reflect the views of the National Science Foundation.}
\end{acknowledgment}

\bibliographystyle{plain} \bibliography{mybib}{}

\end{document}